 \numberwithin{equation}{section}
\def\XXint#1#2#3{{\setbox0=\hbox{$#1{#2#3}{%
\int}$ }
\vcenter{\hbox{$#2#3$ }}\kern-.6\wd0}}
\renewcommand{\epsilon}{\varepsilon}
\newtheorem{theorem}{Theorem}
\newtheorem{lemma}[theorem]{Lemma}
\newtheorem{corollary}[theorem]{Corollary}
\newtheorem{proposition}[theorem]{Proposition}
\newtheorem{definition}[theorem]{Definition}
\newtheorem{remark}[theorem]{Remark}
\newcommand{\bth}{\begin{theorem}}
\newcommand{\ble}{\begin{lemma}}
\newcommand{\bcor}{\begin{corr}}
\newcommand{\bdeff}{\begin{deff}}
\newcommand{\bprop}{\begin{proposition}}
\newcommand{\ele}{\end{lemma}}
\newcommand{\ecor}{\end{corr}}
\newcommand{\edeff}{\end{deff}}
\numberwithin{theorem}{section}
\newcommand{\eprop}{\end{proposition}}
\newcommand{\supp}{\text{supp }}
\renewcommand{\Pi}{\varPi}
\renewcommand{\epsilon}{\varepsilon}
\newcommand{\sgn}{{\text {sgn}}}
\begin{document}

\title[A John-Nirenberg type inequality for critical Sobolev spaces]
{An improvement to the \\ John-Nirenberg inequality for functions\\ in critical Sobolev spaces}

\author[A. D. Mart\'inez]{\'Angel D. Mart\'inez}
\address{Institute for Advanced Study, Fuld Hall 412, 1 Einstein Drive, Princeton, NJ 08540, United States of America} 
\email{amartinez@ias.edu}

\author[D. Spector]{Daniel Spector}
\address{Okinawa Institute of Science and Technology Graduate University,
Nonlinear Analysis Unit, 1919--1 Tancha, Onna-son, Kunigami-gun,
Okinawa, Japan}
\email{daniel.spector@oist.jp}

\begin{abstract}
It is known that functions in a Sobolev space with critical exponent embed into the space of functions of bounded mean oscillation, and therefore satisfy the John-Nirenberg inequality and a corresponding exponential integrability estimate.  While these inequalities are optimal for general functions of bounded mean oscillation, the main result of this paper is an improvement for functions in a class of critical Sobolev spaces.  Precisely, we prove the inequality
\[\mathcal{H}^{\beta}_{\infty}(\{x\in \Omega:|I_\alpha f(x)|>t\})\leq Ce^{-ct^{q'}}\]
for all $\|f\|_{L^{N/\alpha,q}(\Omega)}\leq 1$ and any $\beta \in (0,N]$, where $\Omega \subset \mathbb{R}^N$, $\mathcal{H}^{\beta}_{\infty}$ is the Hausdorff content, $L^{N/\alpha,q}(\Omega)$ is a Lorentz space with $q \in (1,\infty]$, $q'=q/(q-1)$ is the H\"older conjugate to $q$, and $I_\alpha f$ denotes the Riesz potential of $f$ of order $\alpha \in (0,N)$.
\end{abstract}

\maketitle
\section{Introduction}

In 1933, A. Beurling observed that if $f(z)$ is an analytic function for $|z|\leq 1$ and satisfies the integrability condition
\[\int\int_{|z|<1}|f'(z)|^2dxdy\leq\pi,\]
then the set of angles $\theta$ for which $|f(e^{i\theta})|\geq s$ has Lebesgue measure at most $e^{-s^2+1}$ (cf. \cite{Beurling,Moser1971}).  This type of decay implies integrability of $\exp(\alpha|f(z)|^2)$ for $\alpha<1$, while the sharp result, for $\alpha=1$, was not proved until 1985 in the work of Chang and Marshall (cf.~\cite{ChangMarshall}).  This exponential decay estimate of Beurling can be seen as an instance of a Sobolev embedding in the critical exponent with  target a subspace of the space of functions of bounded mean oscillation ($BMO$). Remarkably, this work precedes Sobolev's work on embeddings, the subsequent numerous contributors for results in the critical exponent, and even the introduction of $BMO$.

In this paper we are interested in such improvements to the $BMO$ embedding for functions in a critical Sobolev space.  To this end, let us recall that the space of functions of bounded mean oscillation was introduced by F. John and L. Nirenberg in their seminal paper \cite{JN}. Given $Q_0 \subset \mathbb{R}^N$ a finite cube it can be defined as follows 
\begin{align*}
BMO(Q_0):= \left\{ u \in L^1(Q_0): \|u\|_{BMO(Q_0)}:=\sup_{Q\subset\subset Q_0} \fint_Q |u-u_Q|\;dx<+\infty\right\}.
\end{align*}
Here the supremum is computed with respect to cubes $Q$ with sides parallel to the coordinate axes and $u_Q$ denotes the mean of $u$ in the cube $Q$.  

While the original motivation of such a definition arose from the consideration of problems in elasticity, the influence of this space on harmonic analysis and its applications is far reaching.  A central aspect of this importance is its r\^ole as a replacement for $L^\infty$, e.g. in the theory of concerning the boundedness of translation invariant singular integrals \cite[Theorem 1.1 and Remark 1.3]{Peetre}, as an endpoint in interpolation \cite[Section III]{FeffermanStein}, in the regularity theory of elliptic equations \cite{Moser1960} (which provided a celebrated alternative to the original - and independent - work of De Giorgi and Nash on Hilbert's 19th problem), and, the main point of this paper, as a target for Sobolev embeddings in the critical exponent.

As noted by John and Nirenberg, any bounded function has bounded mean oscillation, though the space $BMO(Q_0)$ is strictly larger than $L^\infty(Q_0)$.  In particular on p.~416 in \cite{JN} they give a class of examples of the form
\begin{align}\label{jnexamples}
u(x):= \int_{Q_0} \log |x-y| f(y)\;dy
\end{align}
for some $f \in L^1(Q_0)$.  The uniting idea for these examples, and a fundamental result proved by them for this space, is that functions in $BMO(Q_0)$ enjoy an exponential decay estimate for their level sets (what is now known as the John-Nirenberg inequality):
\begin{align}\label{jninequality}
|\{x\in Q:|u(x)-u_Q|>t\}|\leq C\exp(-ct/\|u\|_{BMO})|Q|
\end{align}
for certain constants $c,C>0$. From the inequality \eqref{jninequality} one easily deduces an embedding into the Orlicz space of exponentially integrable functions: There exists $c',C'>0$ such that
\begin{align}\label{expint}
\fint_Q\exp\left(c'|u(x)-u_Q|\right)dx\leq C'
\end{align}
for any function $\|u\|_{BMO}\leq 1$. 

The proof of \eqref{jninequality}, and therefore \eqref{expint}, employs a Calder\'on-Zygmund decomposition.  However, as was observed by N. Trudinger \cite[Theorem 1 on p.~476]{Trudinger}, for certain $BMO$ functions one has a simpler proof of the inequality \eqref{expint}.  In particular, for functions which posses a weak derivative in an appropriate space, Trudinger shows how \eqref{expint} follows from representation formulae for these functions in terms of potentials applied to the Euclidean norm of their weak derivatives along with corresponding estimates for these potentials.  This perspective brings into focus two important problems to consider -- that of optimal representations of functions as potentials of their derivatives and that of the mapping properties of these potentials.  The former plays a r\^ole in the determination of sharp constants, e.g.~of the best value of $\beta$ in the inequality
\begin{align*}
\fint_\Omega \exp(\beta |u(x)|^{p'})\;dx \leq C'
\end{align*}
for all $u \in W^{k,p}_0(\Omega)$ such that  $\|\nabla^k u\|_{L^p(\Omega)} \leq 1$, where $p=N/k$ --  in the case $k=1$ the sharp constant is due to J. Moser \cite{Moser1971}, for $k\geq 2$ and $p=2$ it is due to D. Adams \cite{Adams1988}, while for other values of $k,p$ the result is due to I. Shafrir and the second author \cite{ShafrirSpector} (which builds on the foundational work of D. Adams \cite{Adams1988}, see also \cite{GS,GS1} and \cite{Shafrir}). 

\begin{remark}
Although we will not be concerned with this issue in the present work it is worth mentioning that this subtle point has important applications to the Yamabe problem, as Moser did, and other related geometric analysis problems (cf.  \cite{Moser1971, ChangYang} or the survey article \cite{ChangYang2003} and the references therein for more details). Existence of extremizers for the sharp constant in the case $\Omega$ is an $n$-dimensional ball can be found in the work of Carleson and Chang \cite{CarlesonChang}.
\end{remark}
In this paper we focus on the latter question, that of estimates for these potentials in the critical exponent.  In particular, with simple proofs we establish some new exponential decay estimates in the spirit of \eqref{jninequality}.   As we will see, our work extends a result of D. Adams in \cite{Adams1973} and improves upon an estimate of H. Brezis and S. Wainger \cite{BW} (see also \cite{XiaoZhai}).  Here is it useful to change the perspective of the preceding inequality to a corresponding estimate for potentials in the critical exponent:  Let $\Omega \subset \mathbb{R}^N$ be an open and bounded set.  There exists constants $c',C'>0$ such that
\begin{align}\label{expintimproved}
\fint_\Omega \exp\left(c'|I_\alpha f (x)|^{p^\prime}\right)dx\leq C'
\end{align}
for all $f$ with $\operatorname*{supp}f \subset \Omega$ and $\|f\|_{L^p(\Omega)}\leq 1$, where $p=N/\alpha$ and we have used $I_\alpha f$ to denote the Riesz potential of order $\alpha \in (0,N)$ of $f$, defined by
\begin{align*}
I_\alpha f(x) := \frac{1}{\gamma(\alpha)} \int_{\mathbb{R}^N} \frac{f(y)}{|x-y|^{N-\alpha}}\;dy
\end{align*}
for $\gamma(\alpha)=\pi^{N/2}2^{\alpha}\Gamma(\alpha/2)\Gamma((N-\alpha)/2)^{-1}$.  The inequality \eqref{expintimproved} has an extensive history in the literature -- it has been observed by Yudovich in \cite{Yudovich}, is implicit in \cite{P} for $N=2$, the case $\alpha=1$ is proved in Trudinger's paper \cite{Trudinger}, a version for Bessel potentials is due to Strichartz \cite{Strichartz}, the proof of the statement we assert here is in Hedberg's paper \cite{Hedberg}, while the optimal constant was established by Adams in \cite{Adams1988} (see also \cite{XiaoZhai} for the optimal constant on the Lorentz scale).

The consideration of \eqref{jninequality} and \eqref{expint}, along with the comparison of \eqref{expint} and \eqref{expintimproved} prompts one to wonder whether the improved exponential integrability found in \eqref{expintimproved} comes with a corresponding improved exponential decay estimate.  The first result of this paper is the following theorem to this effect.
\begin{theorem}\label{main1}
Let $\Omega \subset \mathbb{R}^N$ be open and $\alpha \in (0,N)$.  There exist constants $c,C>0$ that depend on $\alpha, N,$ and $\Omega$ such that 
\begin{align*}
|\{x\in \Omega :|I_\alpha f(x)|>t\}|\leq Ce^{-ct^{\frac{N}{N-\alpha}}}
\end{align*}
for all $f \in L^{N/\alpha}(\Omega)$ such that $\|f\|_{L^{N/\alpha}(\Omega)}\leq 1$.
\end{theorem}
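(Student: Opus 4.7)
The strategy is to obtain Theorem \ref{main1} as a direct consequence of the exponential integrability \eqref{expintimproved} via Chebyshev's inequality. The essential observation is a numerical coincidence: the exponent $\tfrac{N}{N-\alpha}$ appearing in the claim equals $p'$, the H\"older conjugate of $p = N/\alpha$, which is precisely the exponent inside the integrand of \eqref{expintimproved}. Once one sees this, there is really nothing further to do beyond rewriting an exponential moment bound as a distribution-function bound.

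Concretely, first (assuming, as seems tacit, that $\Omega$ is bounded) I extend $f \in L^{N/\alpha}(\Omega)$ by zero to $\mathbb{R}^N$; this preserves the $L^{N/\alpha}$-norm and places the support inside $\Omega$, so the hypotheses of \eqref{expintimproved} are met and furnish constants $c', C' > 0$ (depending on $\alpha$, $N$, and $\Omega$) with
\[\int_\Omega \exp\bigl(c'|I_\alpha f(x)|^{p'}\bigr)\,dx \;\leq\; C'|\Omega|.\]

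Second, I apply Chebyshev to the nonnegative integrand $\exp(c'|I_\alpha f|^{p'})$: for every $t>0$,
\[\bigl|\{x\in\Omega : |I_\alpha f(x)|>t\}\bigr| = \bigl|\{x\in\Omega : \exp(c'|I_\alpha f(x)|^{p'}) > e^{c't^{p'}}\}\bigr| \leq e^{-c't^{p'}}\int_\Omega \exp\bigl(c'|I_\alpha f(x)|^{p'}\bigr)\,dx.\]
Combining the two displays yields the desired conclusion with $c = c'$ and $C = C'|\Omega|$, whose dependence on $\alpha, N, \Omega$ is inherited from that of $c', C'$.

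There is no substantive obstacle: Theorem \ref{main1} is simply the distribution-function reformulation of the already-known inequality \eqref{expintimproved}. The value of stating it explicitly lies in the comparison with the generic $BMO$ bound \eqref{jninequality}, whose decay is only $e^{-ct}$; the sharpened decay $e^{-ct^{p'}}$ with $p' > 1$ records the genuine gain afforded by the Sobolev/potential structure. The real work of the paper is reserved for the Lorentz-space and Hausdorff-content refinements announced in the abstract, for which Theorem \ref{main1} is a warm-up.
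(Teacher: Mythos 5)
Your proof is correct, and it takes a genuinely different and shorter route than the paper's. You observe that Theorem \ref{main1} is nothing more than the distribution-function reformulation of the classical Hedberg/Adams estimate \eqref{expintimproved}: since $N/(N-\alpha)=p'$ with $p=N/\alpha$, Chebyshev applied to $\exp(c'|I_\alpha f|^{p'})$ gives the level-set decay immediately, with $C=C'|\Omega|$. The paper instead derives Theorem \ref{main1} (together with Theorem \ref{main}) as a corollary of the much stronger Theorem \ref{main2}: one notes $\|f\|_{L^{N/\alpha,N/\alpha}(\Omega)}\leq \frac{N}{N-\alpha}\|f\|_{L^{N/\alpha}(\Omega)}$, rescales, applies Theorem \ref{main2} with $q=N/\alpha$ and $\beta=N$, and then uses the equivalence of $\mathcal{H}^N_\infty$ with Lebesgue measure. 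That detour is deliberate --- the authors want a single proof template (Hedberg's lemma plus the weak Hausdorff-content bound for the fractional maximal function) from which all the announced results, in particular the genuinely new Hausdorff-content version for $\beta<N$ and the Lorentz refinement, follow at once; indeed they flag Theorems \ref{main1} and \ref{main} as ``immediate consequences'' included ``for the convenience of the reader.'' Your Chebyshev argument is the natural proof if Theorem \ref{main1} were the end goal, and it makes transparent that this particular level-set bound is equivalent, up to constants, to the already-known exponential integrability \eqref{expintimproved} rather than being a new fact requiring the paper's machinery; it does not, of course, produce the $\beta<N$ or Lorentz cases, which really do require the fractional-maximal-function route. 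One small caveat that affects both proofs equally: the theorem is stated for $\Omega$ open, but your argument uses $|\Omega|<\infty$, and the paper's uses $\mathcal{H}^\beta_\infty(\Omega)<\infty$ and $|\Omega|<\infty$ via Lemma \ref{lorentzholder}, so finiteness of $|\Omega|$ is a tacit standing hypothesis throughout.
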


This is not surprising, since the proof amounts to relaxing the usual arguments in strong-type spaces to a weak-type setting.  However, the technique is interesting as it suggests the possibility of other related inequalities.  Indeed, when one examines the work of Yudovich \cite{Yudovich}, one finds that he asserts the result not only for integrals over a domain, but even for $n$-dimensional hyperplanes intersected with $\Omega$, $n\leq N$ an integer (this bears a resemblance to the special case of Beurling mentioned at the introduction, who obtains the exponential decay on the circle).  This corresponds to a property enjoyed by functions in the critical Sobolev space which is not true for general functions in $BMO$: the trace of such functions are $BMO$ on the restriction.  Our method can be adapted to this setting, and even to the setting of fractal sets.  In order to state our next result let us first introduce the Hausdorff content of a set $E\subset \mathbb{R}^N$ which is defined by
\[\mathcal{H}_{\infty}^{\beta}(E):=\inf\left\{\sum_{i=1}^\infty \omega_{\beta} r_i^{\beta}:E\subset \bigcup_{i=1}^\infty B(x_i,r_i)\right\}.\]
Here the infimum is taken over all possible coverings of arbitrary radii and $\omega_{\beta} := {\pi^{\beta/2}}/{\Gamma\left(\frac{\beta}{2}+1\right)}$ is the volume of a $\beta$-dimensional sphere.

We can now state
\begin{theorem}\label{main}
Let $\Omega \subset \mathbb{R}^N$ be open, $\alpha \in (0,N)$, and $\beta \in (0,N]$.  There exist constants $c,C>0$ that depend on $\alpha, N, \beta,$ and $\Omega$ such that 
\begin{align*}
\mathcal{H}^{\beta}_{\infty}(\{x\in \Omega:|I_{\alpha}f(x)|>t\})\leq Ce^{-ct^{\frac{N}{N-\alpha}}}
\end{align*}
for all $f \in L^{N/\alpha}(\Omega)$ such that $\|f\|_{L^{N/\alpha}(\Omega)}\leq 1$.
\end{theorem}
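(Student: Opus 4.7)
The plan is to follow the strategy used for Theorem \ref{main1}, replacing Lebesgue measure throughout by the outer measure $\mathcal{H}^\beta_\infty$ and ordinary integration by the Choquet integral. For Theorem \ref{main1}, the estimate reduces to combining \eqref{expintimproved} with Chebyshev's inequality, namely
\[ e^{ct^{p'}}|\{|I_\alpha f|>t\}|\;\leq\;\int_\Omega \exp(c|I_\alpha f|^{p'})\,dx\;\leq\;C, \qquad p'=\tfrac{N}{N-\alpha}. \]
For Theorem \ref{main}, I would first establish the Hausdorff-content analog
\[ \int_\Omega \exp(c|I_\alpha f|^{p'})\,d\mathcal{H}^\beta_\infty\;\leq\;C \]
for $\|f\|_{L^{N/\alpha}(\Omega)}\leq 1$, and then conclude via the Chebyshev inequality for Choquet integrals.

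The technical input is Adams' trace inequality: for any Radon measure $\mu$ with $\mu(B(x,r))\leq C r^\beta$ and $1<p<N/\alpha$,
\[ \Bigl(\int |I_\alpha f|^q\,d\mu\Bigr)^{1/q}\;\leq\;A(p)\,\|f\|_{L^p},\qquad q=\tfrac{\beta p}{N-\alpha p}. \]
By Frostman's lemma, any set $E$ of positive Hausdorff content carries such a measure with $\mu(E)\sim\mathcal{H}^\beta_\infty(E)$, which upgrades the trace inequality to a Choquet-integral inequality against $\mathcal{H}^\beta_\infty$. Expanding $\exp(c|I_\alpha f|^{p'})=\sum_k c^k |I_\alpha f|^{kp'}/k!$ and applying the trace inequality term-by-term with the exponent $p_k$ chosen so that $kp' = \beta p_k/(N-\alpha p_k)$ (so that $p_k\nearrow N/\alpha$ as $k\to\infty$), the boundedness of $\Omega$ allows $\|f\|_{L^{p_k}}$ to be controlled uniformly in $k$ by a constant multiple of $\|f\|_{L^{N/\alpha}}\leq 1$. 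Summing the series delivers the required exponential integrability, and a final application of Chebyshev for the Choquet integral gives the stated bound.

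The main obstacle is the quantitative control of $A(p_k)$ as $p_k\nearrow N/\alpha$: one needs $A(p_k)^{kp'}$ to grow no faster than $(k!)^{1-\varepsilon}$ for some $\varepsilon>0$, so that the series converges for sufficiently small $c$. This reduces to the Moser-style sharp-constant analysis of the critical Sobolev inequality, carried out along the lines of Adams \cite{Adams1988} and Shafrir-Spector \cite{ShafrirSpector}, and amounts to tracking the behaviour of Hedberg's pointwise estimate as $p\to N/\alpha$. With this quantitative form of Adams' inequality in hand, the proof concludes as outlined.
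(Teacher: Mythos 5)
Your route is genuinely different from the paper's: the paper proves the level-set decay directly via a quantitative version of Hedberg's pointwise lemma (Lemma \ref{H1}) together with a weak-type estimate for the fractional maximal function against $\mathcal{H}^\beta_\infty$ (Lemma \ref{FKR}) and a Lorentz-scale H\"older inequality with an explicit blow-up rate in $\delta=N/p-\alpha$ (Lemma \ref{lorentzholder}); the exponent $p$ (equivalently $\delta$) is then chosen \emph{depending on $t$}, which produces $\exp(-ct^{N/(N-\alpha)})$ in one stroke without ever expanding an exponential series. Your plan instead follows the Adams \cite{Adams1973, Adams1988} paradigm: Adams' trace inequality against measures with $\mu(B(x,r))\le r^\beta$, a term-by-term power-series expansion, Frostman duality between such measures and $\mathcal{H}^\beta_\infty$, and then Chebyshev. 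The paper itself notes the analogy with Adams in the remark after Theorem \ref{main}, so the two methods are recognized relatives; what the paper's approach buys is that the hard quantitative content is isolated into a single elementary lemma.

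There are two gaps you should address. The substantive one is that the ``main obstacle'' you identify --- controlling $A(p_k)^{kp'}$ as $p_k\nearrow N/\alpha$ so the series converges --- \emph{is} the theorem, not an obstacle to be outsourced. The references you cite (Adams \cite{Adams1988}, Shafrir--Spector \cite{ShafrirSpector}) prove the corresponding estimate for Lebesgue measure, not for general $\beta$-dimensional Frostman measures; and the growth rate of $A(p)$ along the $\mu$-trace scale is exactly what the paper extracts from its Lorentz H\"older lemma, which shows $\|f\|_{L^{p,1}(\Omega)}\lesssim \delta^{-1/q'}\|f\|_{L^{N/\alpha,q}(\Omega)}$. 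You need to either reproduce this blow-up rate in your setting and then verify the Stirling-type computation that makes $\sum_k c^k A(p_k)^{kp'}/k!$ finite, or concede that the argument is incomplete at this point.

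The secondary issue is the passage via Frostman from the $\mu$-integral inequality to a Choquet-integral inequality for $\mathcal{H}^\beta_\infty$. The duality $\mathcal{H}^\beta_\infty(E)\approx\sup_\mu\mu(E)$ does \emph{not} by itself give
\[
\int_\Omega g\,d\mathcal{H}^\beta_\infty\ \lesssim\ \sup_\mu\int g\,d\mu,
\]
since $\int_\Omega g\,d\mathcal{H}^\beta_\infty=\int_0^\infty \mathcal{H}^\beta_\infty(\{g>s\})\,ds\lesssim\int_0^\infty\sup_\mu\mu(\{g>s\})\,ds$ and the near-extremal $\mu$ changes with $s$, so the supremum cannot be pulled outside the $s$-integral. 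This is fixable, and indeed renders the Choquet-integral detour unnecessary: for \emph{fixed} $t$, take a Frostman measure $\mu_t$ adapted to the set $\{x\in\Omega:|I_\alpha f(x)|>t\}$ with $\mu_t(B(x,r))\le r^\beta$ and $\mu_t(\{|I_\alpha f|>t\})\gtrsim\mathcal{H}^\beta_\infty(\{|I_\alpha f|>t\})$, establish the exponential $\mu$-moment bound uniformly over such measures, and apply Chebyshev for $\mu_t$. This mirrors exactly the logical order in Adams' original proof, and is also the order the paper follows (level-set decay first, Corollary \ref{cor1} afterwards by integrating the tail).
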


This result is analogous to an estimate proved by D. Adams in \cite{Adams1973} for the decay of level sets of the convolution of the Bessel kernel and a function in this critical space.  In particular, in the proof of Theorem 3, Item (ii) on p.~913, Adams proves an exponential decay estimate of the $\mu$ measure of the level sets of such a convolution, where $\mu$ is a non-negative Radon measure which satisfies the ball growth condition $\mu(B(x,r)) \leq r^\beta$ for some $\beta>0$.  One finds the analogy in the equivalence of $\mathcal{H}^{\beta}_{\infty}$ and the supremum over all such measures.

From Theorem \ref{main} we deduce the following improvement to the inequality \eqref{expintimproved}, our
\begin{corollary}\label{cor1}
Let $\Omega \subset \mathbb{R}^N$ be open, $\alpha \in (0,N)$, and $\beta \in (0,N]$.  There exist constants $c',C'>0$ that depend on $\alpha, N, \beta,$ and $\Omega$ such that
\begin{align}
\int_\Omega \exp\left(c'|I_\alpha f |^{p^\prime} \right)d\mathcal{H}_\infty^{\beta} \leq C'
\end{align}
for all $f \in L^{N/\alpha}(\Omega)$ such that $\|f\|_{L^{N/\alpha}(\Omega)}\leq 1$.
\end{corollary}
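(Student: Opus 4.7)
The plan is to derive the corollary from Theorem \ref{main} by a standard Choquet layer-cake argument, choosing the constant $c'$ strictly smaller than the $c$ appearing in Theorem \ref{main} so that the weight from the exponential derivative is absorbed.

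First I would interpret the integral on the left-hand side. Since the Hausdorff content $\mathcal{H}^{\beta}_{\infty}$ is a monotone, subadditive set function, the natural Choquet integral is defined by the layer-cake formula
\[
\int_\Omega \exp\!\left(c'|I_\alpha f|^{p'}\right) d\mathcal{H}_\infty^{\beta} \;=\; \int_0^\infty \mathcal{H}^{\beta}_{\infty}\!\left(\left\{x\in\Omega:\exp(c'|I_\alpha f(x)|^{p'})>s\right\}\right)ds.
\]
Splitting the $s$-integral at $s=1$, the contribution from $s\in[0,1]$ is bounded by $\mathcal{H}^{\beta}_{\infty}(\Omega)$, which is finite (the boundedness of $\Omega$ being implicit, since otherwise the constants in Theorem~\ref{main} would not depend only on $\Omega$).

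Next, for the tail $s\geq 1$, I would perform the change of variables $s=\exp(c' t^{p'})$, so that $ds = c' p' \, t^{p'-1}\exp(c' t^{p'})\,dt$, together with the observation that $\exp(c'|I_\alpha f(x)|^{p'})>s$ is equivalent to $|I_\alpha f(x)|>t$. This converts the tail into
\[
\int_0^\infty \mathcal{H}^{\beta}_{\infty}\!\left(\left\{x\in\Omega: |I_\alpha f(x)|>t\right\}\right) c' p'\, t^{p'-1}\exp(c' t^{p'})\,dt.
\]
At this point Theorem \ref{main} applies directly: recalling that $p'=N/(N-\alpha)$, the set function inside the integral is bounded by $C e^{-c t^{p'}}$, for constants $C,c>0$ depending on $\alpha,N,\beta,\Omega$.

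Finally, I would simply choose $c'$ strictly less than $c$, say $c':=c/2$. The tail integral is then dominated by
\[
C c' p' \int_0^\infty t^{p'-1} e^{-(c-c')t^{p'}}\,dt \;=\; C c' p' \int_0^\infty t^{p'-1} e^{-(c/2)t^{p'}}\,dt,
\]
which converges (it is essentially a Gamma function after the substitution $u=(c/2)t^{p'}$). Collecting the two contributions gives a finite constant $C'=C'(\alpha,N,\beta,\Omega)$, completing the proof.

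There is really no serious obstacle: the argument is the standard passage from distributional decay to Orlicz-type integrability. The only conceptual point worth flagging is that the integral against $\mathcal{H}^{\beta}_{\infty}$ must be read in the Choquet sense, for which the layer-cake identity holds by definition and requires no additivity of the capacity.
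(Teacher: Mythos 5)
Your proposal is correct and follows essentially the same route as the paper: a Choquet layer-cake decomposition, splitting at level $1$, bounding the small-level piece by $\mathcal{H}^{\beta}_{\infty}(\Omega)$, and absorbing the tail by choosing $c'<c$ where $c$ is the decay rate from the exponential level-set estimate. The only cosmetic differences are that you apply Theorem~\ref{main} directly and change variables to exhibit a Gamma-type integral, whereas the paper keeps the layer-cake variable, reduces the tail to $\int_1^\infty t^{-c/c'}\,dt$, proves Corollary~\ref{cor2} first from Theorem~\ref{main2}, and then deduces Corollary~\ref{cor1} by rescaling the norm --- but these are equivalent manipulations of the same argument.
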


We conclude the introduction with an application of our techniques to improve the dimension on the estimate of H. Brezis and S. Wainger \cite{BW}, which follows from an extension of our Theorem \ref{main} to the Lorentz scale.  To this end, let us recall that Brezis and Wainger \cite[Theorem 3 (ii) on p.~784]{BW} proved a limiting case of a convolution inequality of O'Neil \cite{oneil} which establishes that the second parameter in the Lorentz space $L^{p,q}$ in this critical regime, while microscopic, is magnified in these inequalities\footnote{We here paraphrase the expression used by H. Brezis in \cite{Brezis}, which he conveyed to the second author during a discussion of the r\^ole of the second exponent in the Lorentz spaces.}.  As in this paper we work exclusively with the Riesz kernels, we now state a version of their result in this context\footnote{In \cite{BW}, as before with \cite{Adams1973}, their results are stated for the Bessel kernel.} which has been proved by J. Xiao and Zh. Zhai \cite[Theorem 3.1 (ii) on p.~364]{XiaoZhai}:  There exists constants $c',C'>0$ such that
\begin{align}\label{expintimproved2}
\int_\Omega \exp\left(c'|I_\alpha f (x)|^{q^\prime}\right)dx\leq C'
\end{align}
for all $f$ with $\operatorname*{supp}f \subset \Omega$ and $\|f\|_{L^{N/\alpha,q}(\Omega)}\leq 1$.

In particular, our method firstly enables us to establish an exponential decay of the level sets of such convolutions with respect to the Hausdorff content, which is our
\begin{theorem}\label{main2}
Let $\Omega \subset \mathbb{R}^N$ be open, $\alpha \in (0,N)$, $\beta\in(0,N]$, and $q \in (1,\infty]$.  There exist constants $c,C>0$ that depend on $\alpha, N, \beta,q,$ and $\Omega$ such that 
\[\mathcal{H}^{\beta}_{\infty}(\{x\in \Omega:|I_{\alpha}f(x)|>t\})\leq Ce^{-ct^{q'}}\]
for all $f \in L^{N/\alpha,q}(\Omega)$ such that $\|f\|_{L^{N/\alpha,q}(\Omega)}\leq 1$ and $\operatorname{supp}f\subseteq\Omega$.
\end{theorem}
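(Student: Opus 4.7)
The plan is to mirror the proof of Theorem \ref{main} by performing a level-set decomposition of $f$ and inserting the Lorentz structure via H\"older's inequality against the multiplicative measure $d\tau/\tau$. Fix $t>0$ and write $f = f^s + f_s$ with $f^s := f\chi_{\{|f|>s\}}$ and $f_s := f - f^s$, where the truncation height $s=s(t)$ will be chosen so that $\|I_\alpha f_s\|_{L^\infty(\Omega)}\leq t/2$. Since
\[\{|I_\alpha f|>t\}\subseteq \{|I_\alpha f^s|>t/2\}\cup \{|I_\alpha f_s|>t/2\},\]
it then remains to estimate the Hausdorff content of $\{|I_\alpha f^s|>t/2\}$ alone.

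For the low part I would apply the Hardy-Littlewood-P\'olya rearrangement inequality, together with the fact that the symmetric decreasing rearrangement of $|x-\cdot|^{-(N-\alpha)}$ is a constant multiple of $\tau^{-(N-\alpha)/N}$, to obtain the pointwise bound
\[|I_\alpha f_s(x)|\leq C\int_0^{|\Omega|}(f_s)^*(\tau)\,\tau^{\alpha/N-1}\,d\tau.\]
Splitting the integral at $\tau=T_0:=|\{|f|>s\}|$, on $(0,T_0)$ one has $(f_s)^*\leq s$ and a contribution $C s T_0^{\alpha/N}\leq C\|f\|_{L^{N/\alpha,\infty}}\leq C$, using the embedding $L^{N/\alpha,q}\hookrightarrow L^{N/\alpha,\infty}$. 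On $(T_0,|\Omega|)$, H\"older's inequality with exponents $q$ and $q'$ against $d\tau/\tau$ yields
\[\int_{T_0}^{|\Omega|}\tau^{\alpha/N}f^*(\tau)\,\frac{d\tau}{\tau}\leq \|f\|_{L^{N/\alpha,q}}\bigl(\log(|\Omega|/T_0)\bigr)^{1/q'}\leq \bigl(\log(|\Omega|/T_0)\bigr)^{1/q'}.\]
Hence $\|I_\alpha f_s\|_{L^\infty(\Omega)}\leq C_1 + C_2\bigl(\log(|\Omega|/T_0)\bigr)^{1/q'}$, and choosing $s$ so that $T_0 = |\Omega|e^{-ct^{q'}}$ (feasible for $t$ large enough) forces $\|I_\alpha f_s\|_{L^\infty}\leq t/2$.

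For the high part, an analogous H\"older estimate yields $\|f^s\|_{L^1}=\int_0^{T_0}f^*(\tau)\,d\tau\leq C T_0^{1-\alpha/N}$. Combined with the Hausdorff content weak-type inequality
\[\mathcal{H}^\beta_\infty(\{|I_\alpha g|>\lambda\})\leq C(\|g\|_{L^1}/\lambda)^{\beta/(N-\alpha)},\]
applied with $g=f^s$ and $\lambda = t/2$, this gives $\mathcal{H}^\beta_\infty(\{|I_\alpha f^s|>t/2\})\leq C T_0^{\beta/N}t^{-\beta/(N-\alpha)}$. Substituting the chosen $T_0=|\Omega|e^{-ct^{q'}}$ produces a bound of the form $C t^{-\beta/(N-\alpha)}e^{-c'\beta t^{q'}/N}$, absorbed into $C''e^{-c'''t^{q'}}$ for $t$ large; for small $t$ the conclusion is trivial since $\mathcal{H}^\beta_\infty(\Omega)<\infty$.

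The main obstacle is the Hausdorff content weak-type inequality displayed above, which is the principal technical ingredient and presumably already underlies Theorem \ref{main}. For $\beta>N-\alpha$ it follows from a Frostman-type duality: $\mathcal{H}^\beta_\infty(E)$ is comparable to the supremum of $\mu(E)$ over positive Radon measures $\mu$ with $\mu(B(x,r))\leq r^\beta$, and for any such $\mu$ one estimates $I_\alpha^\ast\mu(y)=\int|x-y|^{-(N-\alpha)}d\mu(x)$ by splitting radially at the scale $r_0=\mu(\mathbb{R}^N)^{1/\beta}$. The range $\beta\leq N-\alpha$ where this kernel estimate fails requires more care and is handled by a direct Vitali covering of the level set; once this ingredient is in place, the Lorentz enhancement above is routine.
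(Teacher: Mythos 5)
Your proposal takes a genuinely different route from the paper. Instead of the pointwise Hedberg bound $|I_\alpha f|\lesssim (\mathcal{M}_{\alpha-\epsilon}f)^{\theta}\|f\|^{1-\theta}$ composed with the Hausdorff-content weak-type estimate for the fractional maximal function (Lemma~\ref{FKR}) and the power trick $\mathcal{M}_{\alpha-\epsilon}f\leq (\mathcal{M}_{r(\alpha-\epsilon)}|f|^r)^{1/r}$, you split $f$ at a $t$-dependent height, control the bounded part in $L^\infty$ via rearrangement and a Lorentz H\"older estimate against $d\tau/\tau$, and estimate the unbounded part through a weak-type Hausdorff-content inequality for $I_\alpha$ acting on $L^1$. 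For $\beta>N-\alpha$ this is sound: the Frostman comparison together with the kernel bound $\|I_\alpha^\ast\mu\|_\infty\lesssim \mu(\mathbb{R}^N)^{1-(N-\alpha)/\beta}$ for measures with $\mu(B(x,r))\leq r^\beta$ gives exactly the weak-type estimate you invoke, and the choice $T_0=|\Omega|e^{-ct^{q'}}$ then carries through.

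In the range $\beta\leq N-\alpha$, however, there is a genuine gap: the inequality $\mathcal{H}^\beta_\infty(\{|I_\alpha g|>\lambda\})\leq C(\|g\|_{L^1}/\lambda)^{\beta/(N-\alpha)}$ is \emph{false} for $\beta<N-\alpha$, not merely harder to prove, so no Vitali covering argument can supply it. Take $N=1$, $\alpha=1/2$, $\beta=1/4$, and let $g_M$ consist of $M$ narrow bumps of equal mass $1/M$ at the points $j/M$, $j=1,\dots,M$, so $\|g_M\|_{L^1}=1$. For $\lambda$ of order $M$ the level set $\{I_{1/2}g_M>\lambda\}$ is essentially a union of $M$ intervals of length $\sim (M\lambda)^{-2}$, whose $\mathcal{H}^{1/4}_\infty$-content is comparable to $\min\left(M^{1/2}\lambda^{-1/2},\,1\right)\approx 1$, whereas $(\|g_M\|_{L^1}/\lambda)^{1/2}\approx M^{-1/2}\to 0$. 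The structural obstruction is that the condition $I_\alpha g(x)>\lambda$ is global and does not single out a ball at $x$ carrying a definite fraction of $\|g\|_{L^1}$; the Vitali mechanism that makes Lemma~\ref{FKR} work for the fractional \emph{maximal} function has no analogue for the potential itself. This is precisely why the paper reroutes through $\mathcal{M}_{r(\alpha-\epsilon)}|f|^r$ with $r(\alpha-\epsilon)=N-\beta$ rather than through $I_\alpha$ directly --- so your guess that a weak-type Riesz-potential estimate underlies Theorem~\ref{main} is also not how the paper proceeds. To repair your argument in the missing range you would have to reintroduce a maximal function and the power trick, at which point the two proofs essentially coincide.
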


In the usual way this leads to an improved dimensional version of the inequality of \cite{BW} and \cite{XiaoZhai}, which we state as
\begin{corollary}\label{cor2}
Let $\Omega \subset \mathbb{R}^N$ be open, $\alpha \in (0,N)$, $\beta\in(0,N]$ and $q \in (1,\infty]$.  There exist constants $c',C'>0$ that depend on $\alpha, N, \beta,q,$ and $\Omega$ such that
\begin{align}
\int_\Omega \exp\left(c'|I_\alpha f |^{q^\prime} \right)d\mathcal{H}_\infty^{\beta} \leq C'
\end{align}
for all $f \in L^{N/\alpha,q}(\Omega)$ such that $\|f\|_{L^{N/\alpha,q}(\Omega)}\leq 1$.
\end{corollary}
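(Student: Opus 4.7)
The plan is to reduce Corollary \ref{cor2} directly to the decay estimate of Theorem \ref{main2} by invoking the standard layer-cake expression for the Choquet integral with respect to the Hausdorff content. Recall that for any nonnegative function $g$ one has, essentially by definition of the Choquet integral,
\[
\int_\Omega g\,d\mathcal{H}_\infty^{\beta} \;=\; \int_0^\infty \mathcal{H}_\infty^{\beta}\bigl(\{x\in\Omega : g(x) > t\}\bigr)\,dt.
\]

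I would apply this identity to $g(x) := \exp\bigl(c'|I_\alpha f(x)|^{q'}\bigr)$ and split the $t$-integral at $t=1$. On $t\in[0,1]$ the superlevel set is all of $\Omega$, which contributes $\mathcal{H}_\infty^{\beta}(\Omega)$; since $\Omega$ is bounded (implicit in the dependence of the constants on $\Omega$) this is finite. On $t\in(1,\infty)$ the superlevel set equals $\{x\in\Omega : |I_\alpha f(x)| > (c'^{-1}\log t)^{1/q'}\}$, and the change of variable $t = e^{c's^{q'}}$ transforms the tail into
\[
c'q'\int_0^\infty \mathcal{H}_\infty^{\beta}\bigl(\{x\in\Omega : |I_\alpha f(x)| > s\}\bigr)\, s^{q'-1} e^{c's^{q'}}\,ds.
\]

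At this point I would invoke Theorem \ref{main2}, which (after first extending $f$ by zero outside $\Omega$ so that the support hypothesis there is automatic) gives $\mathcal{H}_\infty^{\beta}(\{|I_\alpha f|>s\})\leq C e^{-cs^{q'}}$ with $c,C$ depending on $\alpha,N,\beta,q,\Omega$. The integrand is then dominated by $Cc'q's^{q'-1}e^{-(c-c')s^{q'}}$, which is integrable on $(0,\infty)$ since $q'\geq 1$, provided one chooses $c' < c$; the choice $c' = c/2$ suffices and yields a finite bound $C'=C'(\alpha,N,\beta,q,\Omega)$.

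The main obstacle is essentially nil, since all the substance is already encoded in Theorem \ref{main2}. The only care required is to ensure $c'<c$ so that the spurious exponential $e^{c's^{q'}}$ introduced by the change of variable is beaten by the exponential decay $e^{-cs^{q'}}$ of the level-set measure; this is exactly the standard equivalence between an exponential weak-type estimate and the corresponding Orlicz-type integral bound, and it will not produce the sharp value of $c'$.
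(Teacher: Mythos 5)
Your proposal is correct and takes essentially the same route as the paper: both use the layer-cake formula for the Choquet integral, split at $t=1$, bound the small-$t$ part by $\mathcal{H}^\beta_\infty(\Omega)$, and invoke Theorem \ref{main2} on the tail, concluding by taking $c'<c$. The only cosmetic difference is that you perform the change of variable $t=e^{c's^{q'}}$, whereas the paper substitutes the bound directly into the $t$-integral to obtain $\int_1^\infty C t^{-c/c'}\,dt$; the two computations are equivalent.
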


The paper is divided as follows. In Section \ref{preliminaries}  we provide some preliminaries about the Lorentz spaces we here require.  In Section \ref{lemmas} we prove a variant of a technical result due to Hedberg as well as several other technical lemmata that will be used in the sequel. Finally, in Section \ref{proofs} we prove the main results.  The main point here is to prove Theorem \ref{main2}, as Theorems \ref{main1} and \ref{main} and Corollaries \ref{cor1} and \ref{cor2} will follow as immediate consequences, though we provide proofs for the convenience of the reader.

\section{Preliminaries on Lorentz spaces}\label{preliminaries}

Let us now introduce several equivalent quasi-norms that can be used to define the Lorentz spaces $L^{p,q}(\mathbb{R}^N)$.  We begin with the development of R. O'Neil in \cite{oneil}.  For $f$ a measurable function on $\mathbb{R}^N$, we define
\begin{align*}
m(f,y):= |\{ |f|>y\}|.
\end{align*} 
As this is a non-increasing function of $y$, it admits a left-continuous inverse, called the non-negative rearrangement of $f$, and which we denote $f^*(x)$.  Further, for $x>0$ we define
\begin{align*}
f^{**}(x):= \frac{1}{x}\int_0^x f^*(t)\;dt.
\end{align*}
With these basic results, we can now give a definition of the Lorentz spaces $L^{p,q}(\mathbb{R}^N)$.  
\begin{definition}
Let $1<p<+\infty$ and $1\leq q<+\infty$.  We define
\begin{align*}
\|f\|_{L^{p,q}(\mathbb{R}^N)} := \left( \int_0^\infty \left[t^{1/p} f^{**}(t)\right]^q\frac{dt}{t}\right)^{1/q},
\end{align*}
and for $1\leq p \leq+\infty$ and $q=+\infty$
\begin{align*}
\|f\|_{L^{p,\infty}(\mathbb{R}^N)} := \sup_{t>0} t^{1/p} f^{**}(t).
\end{align*}
\end{definition}
For these Banach spaces, one has a duality between $L^{p,q}(\mathbb{R}^N)$ and $L^{p',q'}(\mathbb{R}^N)$ for $1<p<+\infty$ and $1\leq q < +\infty$ (see, e.g. Theorem 1.4.17 on p.~52 of \cite{grafakos}).  The Hahn-Banach theorem therefore gives
\begin{align*}
\| f\|_{L^{p,q}(\mathbb{R}^N)} = \sup \left\{ \left| \int_{\mathbb{R}^N} fg \;dx \right| : g \in L^{p',q'}(\mathbb{R}^N) \;\; \|g \|_{L^{q',r'}(\mathbb{R}^N)}\leq 1\right\}.
\end{align*}

Let us observe that with this definition
\begin{align*}
\|f\|_{L^{1,\infty}(\mathbb{R}^N)} &= \|f\|_{L^1(\mathbb{R}^N)} \\
\|f\|_{L^{\infty,\infty}(\mathbb{R}^N)} &= \|f\|_{L^\infty(\mathbb{R}^N)},
\end{align*}
where the spaces $L^1(\mathbb{R}^N)$ and $L^\infty(\mathbb{R}^N)$ are intended in the usual sense.  Note that the former equation is not standard, as $L^{1,\infty}(\mathbb{R}^N)$ has another possible definition, which is only possible through the introduction of a different object.  In particular, for $1<p<+\infty$, one has a quasi-norm on the Lorentz spaces $L^{p,q}(\mathbb{R}^N)$ that is equivalent to the norm we have defined.  What is more, this quasi-norm can be used to define the Lorentz spaces without such restrictions on $p$ and $q$.  Therefore let us introduce the following definition.
\begin{definition}
Let $1 \leq p <+\infty$.  If $0<q<+\infty$ we define
\begin{align*}
|||f|||_{\tilde{L}^{p,q}(\mathbb{R}^N)} :=  \left(\int_0^\infty \left(t^{1/p} f^*(t)\right)^{q} \frac{dt}{t}\right)^{1/q},
\end{align*}
while if $q=+\infty$ we define
\begin{align*}
|||f|||_{\tilde{L}^{p,\infty}(\mathbb{R}^N)} :=   \sup_{t>0} t^{1/p} f^*(t).
\end{align*}
\end{definition}
Then one has the following result on the equivalence of the quasi-norm on $\tilde{L}^{p,q}(\mathbb{R}^N)$ and the norm on $L^{p,q}(\mathbb{R}^N)$ (and so in the sequel we drop the tilde):
\begin{proposition}
Let $1<p<+\infty$ and $1\leq q \leq +\infty$.  Then 
\begin{align*}
|||f|||_{\tilde{L}^{p,q}(\mathbb{R}^N)} \leq \|f\|_{L^{p,q}(\mathbb{R}^N)}\leq p' |||f|||_{\tilde{L}^{p,q}(\mathbb{R}^N)}.
\end{align*}
\end{proposition}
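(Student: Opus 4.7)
The plan is to prove the two inequalities separately, one of which is essentially free from the monotonicity properties of the rearrangement, while the other rests on Hardy's classical integral inequality applied to $f^*$.

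For the lower bound $|||f|||_{\tilde{L}^{p,q}(\mathbb{R}^N)} \leq \|f\|_{L^{p,q}(\mathbb{R}^N)}$, I would begin by observing that since $f^*$ is non-increasing, the pointwise inequality $f^*(t) \leq f^{**}(t)$ holds for every $t>0$; indeed, $f^{**}(t)$ is the average of $f^*$ over $(0,t)$, so it dominates the smallest value $f^*(t)$ on that interval. Substituting this into either the integral definition (when $q<\infty$) or the supremum definition (when $q=\infty$) of the two quasi-norms immediately yields the desired bound.

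For the upper bound, I would split into cases. When $q=\infty$, set $M=|||f|||_{\tilde{L}^{p,\infty}(\mathbb{R}^N)}$, so that $f^*(s)\leq M s^{-1/p}$. A direct computation then gives
\[
t^{1/p} f^{**}(t) = t^{1/p-1}\int_0^t f^*(s)\,ds \leq M\, t^{1/p-1}\int_0^t s^{-1/p}\,ds = p'M,
\]
and taking the supremum over $t>0$ gives $\|f\|_{L^{p,\infty}}\leq p' M$. For $1\leq q<\infty$, I would invoke Hardy's inequality in the form: for $g\geq 0$ measurable on $(0,\infty)$, $r\geq 1$, and $a>0$,
\[
\int_0^\infty \left(\int_0^t g(s)\,ds\right)^r t^{-a-1}\,dt \leq \left(\frac{r}{a}\right)^r \int_0^\infty g(t)^r\, t^{r-a-1}\,dt,
\]
applied with $g=f^*$, $r=q$, and $a=q/p'$. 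The exponents line up: $-a-1 = q/p-q-1$ on the left recovers $(t^{1/p}f^{**}(t))^q \,dt/t$ after factoring out a $t^q$ from $(\int_0^t f^*)^q$, while $r-a-1 = q/p-1$ on the right recovers $(t^{1/p}f^*(t))^q\,dt/t$. The Hardy constant evaluates to $r/a = q/(q/p')=p'$, so taking $q$-th roots produces $\|f\|_{L^{p,q}} \leq p' |||f|||_{\tilde{L}^{p,q}}$.

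There is no serious obstacle here: the lower bound is a tautological consequence of averaging a non-increasing function, and the upper bound is a template application of Hardy's inequality, with the only care required being the bookkeeping of exponents to confirm that the particular choice $a=q/p'$ gives exactly the Lorentz quasi-norms on both sides and produces the sharp constant $p'$. The $q=\infty$ case is handled separately because Hardy's inequality in the stated form requires a finite integrability exponent, but the direct computation there yields the same constant and so fits seamlessly into the unified statement.
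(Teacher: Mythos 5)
Your proposal is correct and takes essentially the same route as the paper: the lower bound from the pointwise inequality $f^*\le f^{**}$ (a consequence of averaging a non-increasing function), and the upper bound from Hardy's inequality applied to $f^*$, with constant $p'$. The only difference is cosmetic---you write Hardy's inequality in the classical power-weight form and verify the exponent bookkeeping explicitly, and you handle $q=\infty$ by a separate direct computation, whereas the paper records its own Hardy lemma (stated only for $q<\infty$) and leaves the $q=\infty$ endpoint implicit.
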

The proof for $1 \leq q<+\infty$ can be found as a variation of the one given for Lemma 2.2 in \cite{oneil}, which we record here as our
\begin{lemma}[Hardy's inequality]\label{hardy}
Let $1<p<+\infty$.  Then for any $q \in [1,\infty)$ one has
\begin{align*}
\left(\int_0^\infty \left[x^{1/p} \fint_0^x f(t)\;dt\right]^q \frac{dx}{x}\right)^{1/q} \leq \frac{p}{p-1} \left(\int_0^\infty \left[x^{1/p} f(x)\right]^q \frac{dx}{x}\right)^{1/q}
\end{align*}
\end{lemma}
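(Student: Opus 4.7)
The plan is to reduce the inequality to a single application of Minkowski's integral inequality, after a scaling substitution that converts the averaging integral into a dilation, exploiting the fact that the measure $\frac{dx}{x}$ on $(0,\infty)$ is invariant under dilations $x\mapsto x/s$. This is the standard argument indicated in O'Neil \cite{oneil}.

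First, I would make the substitution $t=xs$ in the inner average to rewrite
\[
\fint_0^x f(t)\,dt = \int_0^1 f(xs)\,ds,
\]
so that the left-hand side of the claimed inequality becomes
\[
\left(\int_0^\infty \left[x^{1/p}\int_0^1 f(xs)\,ds\right]^q \frac{dx}{x}\right)^{1/q}.
\]
Next I would apply Minkowski's integral inequality in the $L^q((0,\infty);\frac{dx}{x})$ norm---valid because $q\geq 1$---to pull the $s$-integral outside the $L^q$-norm, obtaining the upper bound
\[
\int_0^1 \left(\int_0^\infty \bigl[x^{1/p} f(xs)\bigr]^q \frac{dx}{x}\right)^{1/q}ds.
\]

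Finally, I would change variables $y=xs$ in the inner integral. Since $\frac{dx}{x}=\frac{dy}{y}$ by dilation invariance and $x^{1/p}=s^{-1/p}y^{1/p}$, the factor $s^{-1/p}$ pulls out of the $L^q$-norm and the resulting bound is
\[
\left(\int_0^\infty \bigl[y^{1/p} f(y)\bigr]^q\frac{dy}{y}\right)^{1/q}\int_0^1 s^{-1/p}\,ds = \frac{p}{p-1}\left(\int_0^\infty \bigl[y^{1/p} f(y)\bigr]^q\frac{dy}{y}\right)^{1/q},
\]
which is exactly the desired right-hand side. The hypothesis $p>1$ is used precisely to guarantee integrability of $s^{-1/p}$ at the origin, yielding the constant $p/(p-1)$.

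The only real subtlety lies in justifying Minkowski's integral inequality in this form (one may reduce to the nonnegative measurable case by replacing $f$ with $|f|$ at the outset) and in observing that the inequality used here genuinely requires $q\geq 1$; for $0<q<1$ the inequality reverses and the lemma would fail as stated, which is why the hypothesis restricts to $q\in[1,\infty)$. No other technical obstacle arises.
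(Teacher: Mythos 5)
Your proof is correct, but it takes a genuinely different route from the paper's. You use the classical dilation-plus-Minkowski argument: rewrite $\fint_0^x f = \int_0^1 f(xs)\,ds$, pull the $s$-integral outside the $L^q\!\left((0,\infty);\tfrac{dx}{x}\right)$ norm via Minkowski's integral inequality (valid since $q\geq 1$), and exploit the dilation invariance of $\tfrac{dx}{x}$ to extract $\int_0^1 s^{-1/p}\,ds = \tfrac{p}{p-1}$. The paper instead argues by differentiation: after reducing by density to smooth, compactly supported $f$, it observes that $\int_0^\infty \tfrac{d}{dx}\bigl[x^{1/p}\fint_0^x f\bigr]^q\,dx = 0$, expands the derivative, rearranges to an identity of the form $I = \tfrac{p}{p-1}\int_0^\infty \bigl[x^{1/p}\fint_0^x f\bigr]^{q-1} x^{1/p} f(x)\,\tfrac{dx}{x}$, applies H\"older's inequality on $L^q(dx/x)$ with exponents $q,q'$ to bound the right side by $\tfrac{p}{p-1} I^{1-1/q} \bigl(\int_0^\infty [x^{1/p}f]^q \tfrac{dx}{x}\bigr)^{1/q}$, and reabsorbs $I^{1-1/q}$. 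Your route has the mild advantage of applying directly to any nonnegative measurable $f$ without a density reduction or a reabsorption step (the latter implicitly requires $I<\infty$); the paper's route is slightly more elementary in that it invokes only one-variable H\"older rather than Minkowski's integral inequality. Both produce the same constant $\tfrac{p}{p-1}$. One small inaccuracy in your write-up: you attribute this Minkowski argument to O'Neil, but the paper explicitly notes that O'Neil defers to Zygmund's book, which does not cover the full range $q \geq 1$ needed here, and that this gap is exactly why the paper supplies its own proof.
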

As the proof cited in \cite{oneil} is a book of Zygmund which does not treat the case $q>p$, we here provide details for the convenience of the reader.
\begin{proof}[Proof of Lemma \ref{hardy}]
By density it suffices to prove the result for functions $f \in C^\infty_c(\mathbb{R}^N)$.  For such functions the fundamental theorem of calculus implies 
\begin{align*}
\int_0^\infty \frac{d}{dx} \left[x^{1/p} \fint_0^x f(t)\;dt\right]^q \;dx=0.
\end{align*}
A computation of the derivative then yields
\begin{align*}
\int_0^\infty q \left[x^{1/p} \fint_0^x f(t)\;dt\right]^{q-1}\left( (1/p-1) x^{1/p-2}\int_0^x f(t)\;dt + x^{1/p-1} f(x)\right)  \;dx = 0,
\end{align*}
or
\begin{align*}
\int_0^\infty \left[x^{1/p} \fint_0^x f(t)\;dt\right]^q \frac{dx}{x} = \frac{p}{p-1} \int_0^\infty \left[x^{1/p} \fint_0^x f(t)\;dt\right]^{q-1}x^{1/p} f(x)  \frac{dx}{x}.
\end{align*}
Letting $I$ to denote the integral on the left-hand-side, Holder's inequality on $(0,\infty)$ equipped with the measure $ \frac{dx}{x}$ with exponents $q,q'$ yields
\begin{align*}
I \leq  \frac{p}{p-1} I^{1-1/q} \left(\int_0^\infty \left[x^{1/p} f(x)\right]^q \frac{dx}{x}\right)^{1/q}
\end{align*}
and the result follows from reabsorbing the term $I^{1-1/q}$.
\end{proof}

It will be useful for our purposes to observe an alternative formulation of this equivalent quasi-norm in terms of the distribution function.  In particular, Proposition 1.4.9 in \cite{grafakos} reads
\begin{proposition}\label{Lorentzequiv}
Let $1\leq p<+\infty$.  If $0<q<+\infty$,  then
\begin{align*}
|||f|||_{L^{p,q}(\mathbb{R}^N)} \equiv p^{1/q} \left(\int_0^\infty \left(t |\{ |f|>t\}|^{1/p}\right)^{q} \frac{dt}{t}\right)^{1/q},
\end{align*}
while if $q=+\infty$
\begin{align*}
|||f|||_{L^{p,\infty}(\mathbb{R}^N)} \equiv  \sup_{t>0} t |\{ |f|>t\}|^{1/p}.
\end{align*}
\end{proposition}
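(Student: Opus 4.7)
The plan is to deduce both identities from a single underlying fact: the exact duality
\[
f^*(t) > s \iff t < m(f,s), \qquad m(f,s) := |\{|f|>s\}|,
\]
which is precisely the statement that $f^*$ is the left-continuous non-increasing inverse of the non-increasing distribution function $s \mapsto m(f,s)$. I would record this equivalence first, as it is the only nontrivial ingredient and both halves of the proposition follow from it by bookkeeping.

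For $q<\infty$, I would expand the definition as
\[
|||f|||_{L^{p,q}(\mathbb{R}^N)}^{q} = \int_0^\infty f^*(t)^q\, t^{q/p-1}\, dt,
\]
apply the layer-cake representation $f^*(t)^q = q\int_0^\infty s^{q-1}\chi_{\{s < f^*(t)\}}\, ds$, and exchange the order of integration via Tonelli (legal by non-negativity of the integrand). The duality then converts $\{t : f^*(t) > s\}$ into the interval $[0, m(f,s))$, so the inner integral in $t$ evaluates explicitly to $(p/q)\,m(f,s)^{q/p}$; the remaining factor $q \cdot (p/q) = p$ is what produces the constant $p^{1/q}$ after taking $q$-th roots, matching the displayed formula.

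For $q = \infty$ I would argue both inequalities directly from the duality. In one direction, given $t>0$ and any $0 < s < f^*(t)$, the equivalence forces $m(f,s) \geq t$, so $s\,t^{1/p} \leq s\,m(f,s)^{1/p}$; letting $s \uparrow f^*(t)$ and then taking the supremum in $t$ yields one inequality. In the other, given $s>0$ with $m(f,s)>0$, I set $t = m(f,s)$ and invoke the left-continuity of $f^*$ to conclude $f^*(t) \geq s$, whence $s\, m(f,s)^{1/p} \leq t^{1/p} f^*(t)$; taking the supremum in $s$ closes the loop.

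The most delicate point is that $\{f^*(t) > s\} = \{t < m(f,s)\}$ must be an \emph{exact} identity, not one holding only up to null sets; this is a consequence of defining $f^*$ via the \textbf{left}-continuous inverse, and it is precisely what allows the proposition to assert genuine equality rather than only a two-sided equivalence. Beyond this, every step is a change of variables or an application of Tonelli's theorem, so no further obstacle is anticipated.
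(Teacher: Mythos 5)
The paper offers no proof of this proposition: it is stated as a recollection and justified only by a citation to Proposition 1.4.9 of Grafakos, so there is no ``paper's own proof'' to compare against. Your argument is correct and is the standard one: for $q<\infty$, the layer-cake identity $f^*(t)^q = q\int_0^{f^*(t)}s^{q-1}\,ds$ and Tonelli reduce everything to the fact that $\{t : f^*(t)>s\}$ has measure $m(f,s)$, and integrating $t^{q/p-1}$ over $(0,m(f,s))$ produces exactly the factor $p/q$ which, combined with the outer $q$, yields the advertised constant $p^{1/q}$; for $q=\infty$ the two supremum inequalities follow from the inverse relation between $f^*$ and $m(f,\cdot)$. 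One small point worth flagging: in the $q=\infty$ direction where you set $t=m(f,s)$ and assert $f^*(t)\geq s$, you are genuinely using the paper's left-continuous convention for $f^*$ (i.e.\ $f^*(t)=\inf\{s':m(f,s')<t\}$); with the more common right-continuous convention $\inf\{s':m(f,s')\leq t\}$ this step can fail on plateaus of $m$ (e.g.\ $f=\chi_E$), though the final equality of suprema still holds by an a.e.\ argument. You correctly anticipate this subtlety in your last paragraph, so the proof stands as written.
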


With these definitions, we are now prepared to state a version of H\"older's inequality on the Lorentz scale.  The following theorem is a slight strengthening of the statement in O'Neil's paper \cite[Theorem 3.4]{oneil}, as we observe that one actually can control the norm of the product with the product of the quasi-norms introduced above.
\begin{theorem}\label{holder}
Let $f \in L^{p_1,q_1}(\mathbb{R}^N)$ and $g \in L^{p_2,q_2}(\mathbb{R}^N)$, where
\[\frac{1}{p_1}+\frac{1}{p_2}=\frac{1}{p}\]
and 
\[\frac{1}{q_1}+\frac{1}{q_2}\geq  \frac{1}{q}\]
for some $p>1$ and $q \geq 1$.   Then
\begin{align*}
\|fg\|_{L^{p,q}(\mathbb{R}^N)} \leq e^{1/e} p' |||f |||_{L^{p_1,q_1}(\mathbb{R}^N)}|||g |||_{L^{p_2,q_2}(\mathbb{R}^N)}.
\end{align*}
\end{theorem}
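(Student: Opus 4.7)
My plan is to convert the left-hand side to an integral involving only the non-increasing rearrangements $f^*$ and $g^*$, and then reduce to scalar H\"older on the multiplicative measure space $((0,\infty), dt/t)$. First I would invoke the preceding proposition to bound $\|fg\|_{L^{p,q}(\mathbb{R}^N)} \leq p' |||fg|||_{L^{p,q}(\mathbb{R}^N)}$, which accounts for the $p'$ factor in the statement. The central rearrangement estimate to establish is
\[ t\,(fg)^{**}(t) = \int_0^t (fg)^*(s)\,ds \leq \int_0^t f^*(s)\,g^*(s)\,ds, \]
which follows from the variational characterization $t\,(fg)^{**}(t) = \sup_{|E| \leq t} \int_E |fg|\,dx$ together with the Hardy-Littlewood rearrangement inequality applied to $|f|\chi_E$ and $|g|$, using the pointwise bound $(|f|\chi_E)^* \leq f^* \chi_{[0,|E|]}$.

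In the critical case $1/q = 1/q_1 + 1/q_2$ the argument unfolds directly. Substituting the estimate above into $|||fg|||_{L^{p,q}}^q$, the inner integral is precisely the Hardy averaging of $\phi := f^* g^*$; Lemma~\ref{hardy} pays a factor $p'$ to replace the average with $\phi$ itself. Splitting $t^{1/p} = t^{1/p_1} t^{1/p_2}$ via $1/p = 1/p_1 + 1/p_2$ turns the integrand into $[(t^{1/p_1} f^*(t))\,(t^{1/p_2} g^*(t))]^q$, and a scalar H\"older inequality on $((0,\infty), dt/t)$ with exponents $q_1/q$ and $q_2/q$, combined with the scale invariance of $dt/t$, yields the product $|||f|||_{L^{p_1,q_1}(\mathbb{R}^N)}|||g|||_{L^{p_2,q_2}(\mathbb{R}^N)}$.

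For the sub-critical case $1/q_1 + 1/q_2 > 1/q$, I would select $\tilde q$ with $1/\tilde q = 1/q_1 + 1/q_2 \leq 1/q$, apply the critical case to obtain the estimate with $\tilde q$ in place of $q$, and then pass to $q$ via the continuous embedding $L^{p,\tilde q} \hookrightarrow L^{p,q}$. The main obstacle is pinning down the precise constant $e^{1/e}$: the embedding constant can be bounded by controlling $\sup_t t^{1/p} f^*(t) \leq e^{1/p} |||f|||_{L^{p,\tilde q}(\mathbb{R}^N)}$, proved by integrating $(r^{1/p} f^*(r))^{\tilde q}$ against $dr/r$ on the window $(t/e, t)$, which has $\log$-length one, and exploiting monotonicity of $f^*$; optimizing the windowing threshold across the full range of the involved exponents then produces the numerical factor $e^{1/e}$ claimed in the statement.
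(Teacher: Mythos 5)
Your overall architecture matches the paper's: bound $(fg)^{**}$ by a Hardy average of $f^* g^*$, apply Hardy's inequality, split $t^{1/p}=t^{1/p_1}t^{1/p_2}$, apply scalar H\"older on $((0,\infty),dt/t)$, and reduce the off-critical case via Calder\'on's lemma. Your derivation of the key estimate $t(fg)^{**}(t)\leq\int_0^t f^*g^*$ is a genuinely different and more elementary route: you specialize to the pointwise product and use the variational characterization $t\phi^{**}(t)=\sup_{|E|\leq t}\int_E|\phi|$ together with the Hardy--Littlewood rearrangement inequality, while the paper sets up O'Neil's general product operator formalism (Lemmas~\ref{1.4} and~\ref{1.5}) via layer-cake decompositions. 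For the theorem as stated, which concerns the pointwise product, your shortcut is perfectly adequate and considerably cleaner; the paper's heavier machinery is preserving O'Neil's generality.

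However, there is a bookkeeping error in your opening move. You first invoke the Proposition to pay $p'$ in passing from $\|fg\|_{L^{p,q}}$ to $|||fg|||_{L^{p,q}}$, and then say that Hardy's inequality ``pays a factor $p'$ to replace the average with $\phi$ itself.'' As written this charges $p'$ twice and would give a final constant $(p')^2$. More seriously, the first step is not compatible with the second: $|||fg|||_{L^{p,q}}$ is built from $(fg)^*$, while your rearrangement estimate controls $(fg)^{**}$, so there is nothing to substitute into. The fix is simply to drop the first step: $\|fg\|_{L^{p,q}}$ is by definition $\bigl(\int_0^\infty (t^{1/p}(fg)^{**}(t))^q\,dt/t\bigr)^{1/q}$, which already contains the average $\frac{1}{t}\int_0^t(fg)^*$; your rearrangement estimate drops in directly, and then a single application of Hardy produces the lone factor of $p'$.

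Finally, your windowing argument for the embedding constant in the subcritical case is more complicated than necessary and does not obviously deliver $e^{1/e}$. Integrating on a window of $\log$-length one produces a prefactor of the form $\bigl(\tilde q/(p(1-e^{-\tilde q/p}))\bigr)^{1/\tilde q - 1/q}$, which is worse than what one gets by integrating from $0$ to $t$: monotonicity of $f^*$ gives $t^{1/p}f^*(t)\leq(\tilde q/p)^{1/\tilde q}|||f|||_{L^{p,\tilde q}}$ directly, whence the embedding constant is $(\tilde q/p)^{1/\tilde q-1/q}$, and since $p>1$ this is bounded by $\tilde q^{1/\tilde q}\leq e^{1/e}$. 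Also note a sign slip: in the subcritical case you should take $1/\tilde q=1/q_1+1/q_2\geq 1/q$, i.e.\ $\tilde q\leq q$, so that the embedding $L^{p,\tilde q}\hookrightarrow L^{p,q}$ runs in the correct direction.
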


As the paper of O'Neil does not contain a proof and our calculation leads to slightly different quantities and a different constant than the one claimed in his paper, we here provide one for completeness and the convenience of the reader.  To this end, let us recall that O'Neil defines a product operator
\begin{align*}
h=P(f,g)
\end{align*}
as a bilinear operator on two measure spaces with values in a third measure space which additionally satisfies
\begin{align*}
\|h\|_{\infty} &\leq \|f\|_{\infty}\|g\|_{\infty}, \\
\|h\|_{1} &\leq \|f\|_{1}\|g\|_{\infty} 
\end{align*}
and
\[\|h\|_{1} \leq \|f\|_{\infty}\|g\|_{1}.\]
Here $\|\cdot\|_{\infty}$ and $\|\cdot\|_{1}$ denote the essential supremum and the Lebesgue integral on the corresponding measure spaces.  For clarity of exposition we now restrict ourselves to Euclidean space and the notation we have previously introduced.  We note, however, that these results also hold in this more general framework.

For such operators we require the estimate
\begin{lemma}\label{1.5}
\begin{align*}
xh^{**}(x) \leq \int_0^x f^{*}(t)g^{*}(t)\;dt.
\end{align*}
\end{lemma}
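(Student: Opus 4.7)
The plan is to prove the inequality by decomposing $f$ and $g$ into indicators of their super-level sets via the layer-cake formula, reducing the estimate to the indicator-function case where the three axioms of a product operator are sharp, and then re-assembling via Fubini to recover $\int_0^x f^*(t)g^*(t)\,dt$. Note first that $xh^{**}(x) = \int_0^x h^*(\tau)\,d\tau$, so the claim is equivalent to bounding that integral by $\int_0^x f^*(t)g^*(t)\,dt$.

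Concretely, I would use the signed layer-cake identity $f(y) = \int_0^\infty \mathrm{sgn}(f(y))\,\mathbf{1}_{\{|f|>s\}}(y)\,ds$, together with the analogous identity for $g$, and apply bilinearity of $P$ to obtain
\[
P(f,g) = \int_0^\infty\!\int_0^\infty P\bigl(\mathrm{sgn}(f)\mathbf{1}_{\{|f|>s\}},\,\mathrm{sgn}(g)\mathbf{1}_{\{|g|>t\}}\bigr)\,ds\,dt.
\]
Writing $k_{s,t}$ for the inner integrand, each of its two arguments is bounded by $1$ in $L^\infty$ and has $L^1$ norm equal to the measure of the corresponding super-level set, so the three axioms of a product operator give $\|k_{s,t}\|_\infty \leq 1$ and $\|k_{s,t}\|_1 \leq \min(|\{|f|>s\}|,|\{|g|>t\}|)$. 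Since $\int_0^x k_{s,t}^*(\tau)\,d\tau$ is dominated separately by $x\|k_{s,t}\|_\infty$ and by $\|k_{s,t}\|_1$, these combine into the key indicator-level estimate
\[
\int_0^x k_{s,t}^*(\tau)\,d\tau \leq \min\bigl(x,\,|\{|f|>s\}|,\,|\{|g|>t\}|\bigr).
\]

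To conclude, I would use the subadditivity of the functional $h \mapsto \int_0^x h^*(\tau)\,d\tau = xh^{**}(x)$ (which is a norm on the $L^{p,1}$ scale and satisfies the triangle inequality), extended from finite sums to integrals by a Riemann-sum approximation, to push the rearrangement inside the double integral. This yields
\[
\int_0^x h^*(\tau)\,d\tau \leq \int_0^\infty\!\int_0^\infty \min\bigl(x,\,|\{|f|>s\}|,\,|\{|g|>t\}|\bigr)\,ds\,dt.
\]
An elementary layer-cake expansion $\min(x,\alpha,\beta) = \int_0^\infty \mathbf{1}_{\{r<x\}}\mathbf{1}_{\{r<\alpha\}}\mathbf{1}_{\{r<\beta\}}\,dr$, Fubini, and the identity $\int_0^\infty \mathbf{1}_{\{|\{|f|>s\}|>r\}}\,ds = f^*(r)$ then collapse the right-hand side to $\int_0^x f^*(r)g^*(r)\,dr$, finishing the proof.

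The main obstacle is the interchange of $P$ with the double integral in the layer-cake step, since the axioms supply only sup/integral bounds on the output of $P$ and no a priori continuity of $P$. I would handle this by first establishing the whole inequality for simple $f,g$, where the layer-cake identities become finite sums and bilinearity is immediate, and then passing to general $f,g$ by approximation with the $L^1$ axiom forcing the required convergence $P(f_n,g_n)\to P(f,g)$.
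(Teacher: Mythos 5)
Your proof is correct, but it takes a genuinely different route from the paper's. The paper follows O'Neil's original argument: it decomposes only $f$ into ``slab'' functions $f_n$ between levels of a doubly infinite sequence $\{y_n\}$, feeds each slab into Lemma~\ref{1.4} (which internally handles the truncation of $g$ and yields two separate bounds for $h_n^{**}$), passes from the resulting sum to a Stieltjes integral, and concludes via a change of variables $y=f^*(u)$ and an integration by parts in which the boundary term $-g^{**}(t)f^*(t)$ exactly cancels the contribution from the $n\le 0$ range. Your proof is fully symmetric: you slice both $f$ and $g$ into indicators of superlevel sets by the layer-cake formula, apply the three product-operator axioms directly at the indicator level to obtain $\int_0^x k_{s,t}^*\le\min(x,\,|\{|f|>s\}|,\,|\{|g|>t\}|)$, and reassemble via the subadditivity of $h\mapsto\int_0^x h^*$, Fubini, and the identity $\int_0^\infty\mathbf{1}_{\{|\{|f|>s\}|>r\}}\,ds=f^*(r)$. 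This avoids Lemma~\ref{1.4}, the continuum limit, and the integration-by-parts cancellation entirely, and makes transparent why the answer is the symmetric expression $\int_0^x f^*g^*$. The price is the interchange of $P$ with the double $ds\,dt$ integral, which you correctly flag; your proposed fix (prove it for simple $f,g$, where the layer-cake integrals are finite sums and plain bilinearity suffices, then pass to the limit using the $L^1$ axiom) is the right one, and in fairness the paper's own step $h=\sum_n P(f_n,g)$ with an infinite sum has the same implicit issue and is resolved the same way. One small caveat: your parenthetical describing $h\mapsto\int_0^x h^*(\tau)\,d\tau=xh^{**}(x)$ as ``a norm on the $L^{p,1}$ scale'' is loose --- what you actually need (and what is true) is simply that this functional is subadditive, which follows from the representation $\int_0^x h^*(\tau)\,d\tau=\sup\{\int_E|h|:|E|\le x\}$.
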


Assuming that we have established it, let us deduce Theorem \ref{holder}.
\begin{proof}[Proof of Theorem \ref{holder}]
We have
\begin{align*}
\|h\|_{L^{p,q}(\mathbb{R}^N)}= \left( \int_0^\infty (x^{1/p}h^{**}(x))^q \;\frac{dx}{x}\right)^\frac{1}{q}.
\end{align*}
By Lemma \ref{1.5} one has
\begin{align*}
h^{**}(x) \leq \frac{1}{x} \int_0^x f^{*}(t)g^{*}(t)\;dt.
\end{align*}
which by Hardy's inequality (Lemma \ref{hardy}) implies
\begin{align*}
\|h\|_{L^{p,q}(\mathbb{R}^N)} \leq  p' \left( \int_0^\infty (x^{1/p} f^{*}(x) g^{*}(x))^q \;\frac{dx}{x}\right)^\frac{1}{q}.
\end{align*}
Now if 
\begin{align*}
\frac{1}{p} &= \frac{1}{p_1}+\frac{1}{p_2} \\
\frac{1}{q} &= \frac{1}{q_1}+\frac{1}{q_2} 
\end{align*}
we have
\begin{align*}
x^{1/p} f^{*}(x) g^{*}(x) = x^{1/p_1} f^{*}(x) x^{1/p_2} g^{*}(x)
\end{align*}
and it suffices to apply H\"older's inequality with exponents $q_1,q_2$ to obtain
\begin{align*}
\|h\|_{L^{p,q}(\mathbb{R}^N)} \leq p'|||f|||_{L^{p_1,q_1}(\mathbb{R}^N)} |||g|||_{L^{p_2,q_2}(\mathbb{R}^N)}.
\end{align*}
For any different value of $q$ which is admissible we define
\begin{align*}
\frac{1}{\tilde{q}} &= \frac{1}{q_1}+\frac{1}{q_2}.
\end{align*}
Now Calder\'on's Lemma  implies
\[\|h\|_{L^{p,q}(\mathbb{R}^N)} \leq p' \left( \frac{\tilde{q}}{p}\right)^{1/\tilde{q}-1/q}||h||_{L^{p,\tilde{q}}(\mathbb{R}^N)}\]
for $\tilde{q}\leq q$. This result can be found as Proposition 1.4.10 in \cite{grafakos} or Lemma 2.5 in \cite{oneil} for the alternative norm, but with the same constant. This observation together with the previous case implies
\begin{align*}
\|h\|_{L^{p,q}(\mathbb{R}^N)} &\leq p' \left( \frac{\tilde{q}}{p}\right)^{1/\tilde{q}-1/q}|||f|||_{L^{p_1,q_1}(\mathbb{R}^N)} |||g|||_{L^{p_2,q_2}(\mathbb{R}^N)} \\ 
&\leq e^{1/e}p' |||f|||_{L^{p_1,q_1}(\mathbb{R}^N)} |||g|||_{L^{p_2,q_2}(\mathbb{R}^N)}.
\end{align*}
Notice that the constant can be shown to be $e^{1/e}$, independent of the rest of parameters.
\end{proof}

The proof of Lemma \ref{1.5} will be argued from a variation of O'Neil's Lemma 1.4: 
\begin{lemma}\label{1.4}
If $|f|\leq \alpha$ and the support of $f$ has measure at most $x$ then one has
\begin{align*}
h^{**}(t) &\leq \alpha g^{**}(t) \\
h^{**}(t) &\leq \alpha \frac{x}{t} g^{**}(x).
\end{align*}
\end{lemma}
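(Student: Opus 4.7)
The plan is to prove both inequalities via a truncation decomposition of $g$, exploiting the bilinearity of the product operator $P$ and its three defining $L^1$--$L^\infty$ bounds. The essential tool is the subadditivity
\[h^{**}(t)\le h_1^{**}(t)+h_2^{**}(t)\qquad\text{whenever }h=h_1+h_2,\]
which follows from the identity $t\,h^{**}(t)=\int_0^t h^*(u)\,du=\sup_{|E|\le t}\int_E|h|\,dx$ together with the triangle inequality inside the supremum.

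For the first bound $h^{**}(t)\le\alpha g^{**}(t)$, I will truncate $g$ at level $y:=g^*(t)$, writing $g_2:=\sgn(g)\min(|g|,y)$ and $g_1:=g-g_2=\sgn(g)(|g|-y)_+$, and setting $h_i:=P(f,g_i)$ so that $h=h_1+h_2$ by bilinearity. Since $\|g_2\|_\infty\le g^*(t)$, the product-operator bound $\|h_2\|_\infty\le\|f\|_\infty\|g_2\|_\infty$ gives $\|h_2\|_\infty\le\alpha g^*(t)$, and therefore $h_2^{**}(t)\le\alpha g^*(t)$. For the tail, equimeasurability yields
\[\|g_1\|_1=\int(|g|-g^*(t))_+\,dx=\int_0^t\bigl(g^*(u)-g^*(t)\bigr)\,du=t\bigl(g^{**}(t)-g^*(t)\bigr),\]
so that $\|h_1\|_1\le\|f\|_\infty\|g_1\|_1\le\alpha t(g^{**}(t)-g^*(t))$ and hence $h_1^{**}(t)\le\|h_1\|_1/t\le\alpha(g^{**}(t)-g^*(t))$. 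The two contributions sum to exactly $\alpha g^{**}(t)$.

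For the second bound $h^{**}(t)\le\alpha(x/t)g^{**}(x)$ I will repeat the decomposition at the new level $y:=g^*(x)$, this time invoking both hypotheses $\|f\|_\infty\le\alpha$ and $\|f\|_1\le\alpha x$. For the bounded part, $\|h_2\|_1\le\|f\|_1\|g_2\|_\infty\le\alpha x\,g^*(x)$, so $h_2^{**}(t)\le\alpha x g^*(x)/t$. For the tail, the same rearrangement computation (with $x$ replacing $t$) gives $\|h_1\|_1\le\alpha x(g^{**}(x)-g^*(x))$, hence $h_1^{**}(t)\le\alpha x(g^{**}(x)-g^*(x))/t$. Summing yields $\alpha x g^{**}(x)/t$, as required.

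The delicate point is the choice of truncation level and the exact computation of the tail's $L^1$ norm: cutting $g$ at the height $g^*(t)$ (respectively $g^*(x)$) makes the telescope $\int_0^t(g^*(u)-g^*(t))\,du=t(g^{**}(t)-g^*(t))$ pair perfectly with the $L^\infty$ contribution $\alpha g^*(t)$ coming from $h_2$ to produce the sharp constant $\alpha$, rather than the $2\alpha$ one would get from a cruder indicator-based splitting of $g$.
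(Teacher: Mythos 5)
Your proof is correct and follows essentially the same route as the paper: the same truncation of $g$ at height $g^*(t)$ (resp.\ $g^*(x)$), the same three product-operator bounds, and the same identity $\int_{g^*(a)}^\infty m(g,y)\,dy = a\bigl(g^{**}(a)-g^*(a)\bigr)$. Your justification of subadditivity via $t\,h^{**}(t)=\sup_{|E|\le t}\int_E|h|$ is in fact cleaner than the paper's appeal to disjoint supports.
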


From this we can prove our Lemma \ref{1.5} as follows.  
\begin{proof}[Proof of Lemma \ref{1.5}]
As in O'Neil's proof of \cite[Lemma 1.5]{oneil} we pick a doubly infinite sequence $\{y_n\}$ such that 
\begin{align*}
y_0&=f^*(t),\\
 y_n&\leq y_{n+1},\\
\lim_{n \to \infty}& y_n = +\infty
 \end{align*}
 and
\[  \lim_{n \to -\infty} y_n = 0.\]
From this we can express
 \begin{align*}
 f(z)= \sum_{n=-\infty}^\infty f_n(z)
 \end{align*}
where
\begin{align*}
f_n(z) :=
  \begin{cases}
                                   0 & \text{if $|f(z)|\leq y_{n-1}$,} \\
                                   f(z)- y_{n-1} \;\sgn \;f(z)  & \text{if $y_{n-1} < |f(z)| \leq y_n$,} \\
  y_{n} \; \sgn \;f(z)  & \text{if $ |f(z)|>y_n$}.
  \end{cases}
\end{align*}
This representation implies
\begin{align*}
h &= P\left(\sum_{n=-\infty}^0 f_n, g\right) + P\left(\sum_{n=1}^\infty f_n, g\right)\\ 
&=: \sum_{n=-\infty}^0 h_n + \sum_{n=1}^\infty h_n 
\end{align*} 
and therefore 
\begin{align*}
h^{**}(t) &\leq \sum_{n=-\infty}^0 h^{**}_n(t) + \sum_{n=1}^\infty h^{**}_n(t).
\end{align*}
For the first we use the top equation in Lemma \ref{1.4} and for the second we use the bottom equation:
\begin{align*}
h^{**}(t) &\leq \sum_{n=-\infty}^0 (y_n-y_{n-1}) g^{**}(t) + \sum_{n=1}^\infty (y_n-y_{n-1}) \frac{m(f,y_{n-1})}{t}g^{**}(m(f,y_{n-1})) \\
&=f^*(t)g^{**}(t) + \frac{1}{t}\int_{f^*(t)}^\infty m(f,y)g^{**}(m(f,y))\;dy \\
&=: I+II
\end{align*}
For the second term we make the change of variables $y=f^*(u)$ to obtain
\begin{align*}
II = -\frac{1}{t}\int_0^t u g^{**}(u)df^*(u).
\end{align*}
The same integration by parts performed in Lemma 1.5 then yields
\begin{align*}
II=-\frac{1}{t} u g^{**}(u)f^*(u) \Bigg|_0^t + \frac{1}{t}\int_0^t g^{*}(u)f^*(u)\;du.
\end{align*}
In particular, for the first term of this second term we find
\begin{align*}
-\frac{1}{t} u g^{**}(u)f^*(u) \Bigg|_0^t = -g^{**}(t)f^*(t),
\end{align*}
which precisely cancels the first term!  Finally the second term is as desired, and thus we obtain the thesis.
\end{proof}

Finally, we complete the proof of our Lemma \ref{1.4}.
\begin{proof}[Proof of Lemma \ref{1.4}]
As in his proof of Lemma 1.4 in \cite{oneil} we define the truncation of the function $g$ at height $u$
\begin{align*}
g_u(z) :=
  \begin{cases}
                                   g(z) & \text{if $|g(z)| \leq u$} \\
                                   u\; \sgn \;g(z) & \text{if $|g(z)|>u$}
  \end{cases}
\end{align*}
and what remains above height $u$, $g^u:=g-g_u$, we find
\begin{align*}
h=P(f,g)&=P(f,g_u)+P(f,g^u) \\
&=h_1+h_2.
\end{align*}
Note that being a product operator implies that if $|f|\leq \alpha$ and has support on a set of measure at most $x$ then
\begin{align*}
\|h_1\|_{L^\infty(\mathbb{R}^N)} &\leq \|f\|_{L^\infty(\mathbb{R}^N)}\|g_u\|_{L^\infty(\mathbb{R}^N)} \leq \alpha u,\\
\|h_1\|_{L^1(\mathbb{R}^N)}  &\leq \|f\|_{L^1(\mathbb{R}^N)}\|g_u\|_{L^\infty(\mathbb{R}^N)} \leq \alpha x u 
\end{align*}
and
\[\|h_2\|_{L^1(\mathbb{R}^N)} \leq  \|f\|_{L^\infty(\mathbb{R}^N)}\|g^u\|_{L^1(\mathbb{R}^N)} \leq \alpha \int_u^\infty m(g,y)\;dy.\]
Thus we estimate 
\begin{align*}
h^{**}(t)=\frac{1}{t}\int_0^t h^*(z)\;dz \leq \frac{1}{t} \int_0^t h^*_1(z)+h^*_2(z)\;dz,
\end{align*}
where we have used
\begin{align*}
h^*(z) \leq h^*_1(z)+h^*_2(z),
\end{align*}
which relies on the fact that $h_1,h_2$ have disjoint support.

Now by the estimates for $h_1,h_2$  (in the first the $L^\infty$ estimate for $h_1$ and in the second the $L^1$ estimate) we find
\begin{align*}
h^{**}(t) \leq  \frac{1}{t} \alpha \left( tu +\int_u^\infty m(g,y)\;dy\right) \\
h^{**}(t) \leq \frac{1}{t}  \alpha\left( xu +\int_u^\infty m(g,y)\;dy\right).
\end{align*}
The choice $u=g^*(t)$ or $g^*(x)$ and the equality \begin{align*}
ag^*(a) +\int_{g^*(a)}^\infty m(g,y)\;dy = a g^{**}(a)
\end{align*}
with $a=t,x$ yield
\begin{align*}
h^{**}(t) \leq  \alpha g^{**}(t) \\
h^{**}(t) \leq \alpha\frac{x}{t} g^{**}(t).
\end{align*}
\end{proof}

Finally we require the $L^1$ endpoint of H\"older's inequality stated in \cite{oneil}.
\begin{theorem}[Theorem 3.5 in \cite{oneil}]\label{holderprime}
Let $f \in L^{p_1,q_1}(\mathbb{R}^N)$ and $g \in L^{p_2,q_2}(\mathbb{R}^N)$, where
\begin{align*}
\frac{1}{p_1}+\frac{1}{p_2}&=1\\
\frac{1}{q_1}+\frac{1}{q_2}&\geq  1
\end{align*}
Then
\begin{align*}
\|fg\|_{L^{1}(\mathbb{R}^N)} \leq e^{1/e} |||f |||_{L^{p_1,q_1}(\mathbb{R}^N)}|||g |||_{L^{p_2,q_2}(\mathbb{R}^N)}.
\end{align*}
\end{theorem}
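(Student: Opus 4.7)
The plan is to repeat the argument from the proof of Theorem \ref{holder}, with the one modification that at the $L^1$ endpoint Hardy's inequality is both unnecessary and inapplicable (its constant $p/(p-1)$ blows up as $p\to 1^+$). Setting $h=fg$ in Lemma \ref{1.5} and letting $x\to\infty$ produces the Hardy--Littlewood rearrangement inequality $\|fg\|_{L^1(\mathbb{R}^N)}=\lim_{x\to\infty}x\,h^{**}(x)\le \int_0^\infty f^*(t)g^*(t)\,dt$. Using $1/p_1+1/p_2=1$, the right-hand side rewrites as $\int_0^\infty (t^{1/p_1}f^*(t))(t^{1/p_2}g^*(t))\,dt/t$.

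If $1/q_1+1/q_2=1$, the classical H\"older inequality on $((0,\infty),dt/t)$ with exponents $(q_1,q_2)$ delivers the conclusion with constant $1$. If $1/q_1+1/q_2>1$, I would instead apply H\"older with the asymmetric pair $(q_1,q_1')$, admissible because the hypothesis forces $q_2\le q_1'$; this produces
\begin{align*}
\|fg\|_{L^1(\mathbb{R}^N)}\le |||f|||_{L^{p_1,q_1}(\mathbb{R}^N)}\,|||g|||_{L^{p_2,q_1'}(\mathbb{R}^N)}.
\end{align*}
Since $q_2\le q_1'$, Calder\'on's Lemma (Proposition 1.4.10 in \cite{grafakos}) then provides
\begin{align*}
|||g|||_{L^{p_2,q_1'}(\mathbb{R}^N)}\le (q_2/p_2)^{1/q_2-1/q_1'}|||g|||_{L^{p_2,q_2}(\mathbb{R}^N)},
\end{align*}
and the prefactor is bounded by $e^{1/e}$ uniformly in the parameters: writing $\xi=q_2/p_2$ and $\alpha=1/q_2-1/q_1'\in[0,1/q_2]$, either $\xi\le 1$ (so $\xi^\alpha\le 1$) or $\xi\ge 1$ (so $\xi^\alpha\le \xi^{1/q_2}=q_2^{1/q_2}p_2^{-1/q_2}\le e^{1/e}$, invoking $q_2^{1/q_2}\le e^{1/e}$ together with $p_2\ge 1$).

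The main subtlety is the asymmetric choice of H\"older exponents: pairing $q_1$ with its H\"older conjugate $q_1'$ (rather than a symmetric pair) is what permits a single application of Calder\'on's Lemma, yielding precisely one factor of $e^{1/e}$ rather than the $e^{2/e}$ a symmetric choice would produce. Everything else is routine manipulation of rearrangement inequalities already present in the paper, and the constant in the statement is thereby recovered exactly.
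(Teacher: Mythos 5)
Your proof is correct and follows the same route as the paper: the rearrangement bound from Lemma \ref{1.5} in the limit $x\to\infty$, H\"older's inequality on $((0,\infty),dt/t)$ with an auxiliary conjugate exponent pair, and Calder\'on's Lemma to pass back to the prescribed Lorentz indices. The only cosmetic difference is that you pair $q_1$ with $q_1'$ and downgrade the $g$-factor, while the paper pairs $q_2$ with $\tilde q_1=q_2'$ and downgrades the $f$-factor; your explicit check that the Calder\'on prefactor is bounded by $e^{1/e}$ is a welcome detail the paper leaves implicit by citing the analogous step in Theorem \ref{holder}.
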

\begin{proof}
If we again define $h=P(f,g)$ we have
\begin{align*}
\|h\|_{L^1(\mathbb{R}^N)} &= \lim_{x \to \infty} \int_0^x h^*(t)\;dt \\
&= \lim_{x \to \infty} x h^{**}(x) \\
&\leq \lim_{x \to \infty} \int_0^x f^{*}(t)g^{*}(t)\;dt \\
&=\int_0^\infty t^{1/p_1}f^{*}(t) t^{1/p_2}g^{*}(t)\;\frac{dt}{t},
\end{align*}
and H\"older's inequality implies
\begin{align*}
\|h\|_{L^1(\mathbb{R}^N)} \leq |||f |||_{L^{p_1,\tilde{q}_1}(\mathbb{R}^N)}|||g |||_{L^{p_2,q_2}(\mathbb{R}^N)}
\end{align*}
where $\tilde{q}_1$ is chosen such that
\begin{align*}
\frac{1}{\tilde{q}_1}+\frac{1}{q_2}=1.
\end{align*}
The result then follows from Calder\'on's Lemma as in the proof of theorem \ref{holder} above with the same constant $e^{1/e}$.
\end{proof}

\section{Auxiliary results}\label{lemmas}

In this section we expose some technical results that will be used in the sequel. We will need the following estimate for the weak-$L^p$ quasi-norm of truncated potentials:

\begin{lemma}\label{lorentzriesz}
Let $\alpha \in (0,N)$ and $p\in(1,N/\alpha)$.  Then
\begin{align*}
 \left|\left|\left| \frac{\chi_{B(0,r)^c}}{|\cdot|^{N-\alpha}} \right|\right|\right|_{L^{p',\infty}(\mathbb{R}^N)} = |B(0,1)|^{1/p'} r^{-\delta}
 \end{align*}
where $\delta=\frac{N}{p}-\alpha>0$.  
\end{lemma}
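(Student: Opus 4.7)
My plan is to invoke Proposition~\ref{Lorentzequiv} to recast the quasi-norm as
\[
|||f|||_{L^{p',\infty}(\mathbb{R}^N)} = \sup_{t>0} t\,|\{|f|>t\}|^{1/p'},
\]
where $f(x) := \chi_{B(0,r)^c}(x)/|x|^{N-\alpha}$, and then to compute the distribution function of $f$ by hand. An elementary calculation shows that $\{|f|>t\}$ equals the annulus $\{r<|x|<t^{-1/(N-\alpha)}\}$, which is empty when $t\geq r^{-(N-\alpha)}$ and of volume $|B(0,1)|\bigl(t^{-N/(N-\alpha)}-r^N\bigr)$ otherwise. In particular the supremum is effectively taken over $t\in(0,r^{-(N-\alpha)})$.

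Next I would bound the annulus by its enclosing ball $B(0,t^{-1/(N-\alpha)})$, so that $|\{|f|>t\}| \leq |B(0,1)|\,t^{-N/(N-\alpha)}$. Substituting this estimate and applying the algebraic identity $1 - \tfrac{N}{(N-\alpha)p'} = \tfrac{\delta}{N-\alpha}$, which follows from $\tfrac{1}{p}+\tfrac{1}{p'}=1$ together with $\delta = \tfrac{N}{p}-\alpha$, gives
\[
t\,|\{|f|>t\}|^{1/p'} \leq |B(0,1)|^{1/p'}\,t^{\delta/(N-\alpha)}.
\]
Because $\delta/(N-\alpha)>0$, the right-hand side is monotone increasing in $t$, so its supremum over the effective range $(0,r^{-(N-\alpha)}]$ is attained at the right endpoint and equals $|B(0,1)|^{1/p'}\,r^{-\delta}$, which is the claimed value.

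The main subtlety concerns the matching direction of the estimate: because the annulus-to-ball enclosure is not tight, the argument above yields only an upper bound, and the equality asserted in the statement requires some additional care. To handle this, I would exploit the scaling homogeneity
\[
|||\chi_{B(0,r)^c}/|\cdot|^{N-\alpha}|||_{L^{p',\infty}(\mathbb{R}^N)} = r^{-\delta}\,|||\chi_{B(0,1)^c}/|\cdot|^{N-\alpha}|||_{L^{p',\infty}(\mathbb{R}^N)}
\]
obtained by a change of variables and the scaling of the distribution function, reducing the problem to the case $r=1$, and then identify the constant $|B(0,1)|^{1/p'}$ either by a careful optimization of the exact expression $t\,(t^{-N/(N-\alpha)}-1)^{1/p'}$ over $t\in(0,1)$ or by a duality test against the characteristic function of a suitably chosen ball. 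In practice only the upper bound above is needed for the Hedberg-type arguments in the remainder of the paper, so the precise determination of the optimal constant is not essential.
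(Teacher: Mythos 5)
Your computation of the distribution function is the right one, and it is \emph{more} careful than the paper's: the level set $\{|f|>t\}$ really is the annulus $\{r<|x|<t^{-1/(N-\alpha)}\}$ (for $0<t<r^{\alpha-N}$), whereas the paper's proof quietly writes its measure as $|B(0,1)|\,t^{-N/(N-\alpha)}$, i.e. treats it as the full ball, forgetting to subtract $B(0,r)$. As a consequence the asserted equality in the lemma is actually a one-sided bound $\leq$: after the rescaling $t=s\,r^{\alpha-N}$, the quantity $t\,|\{|f|>t\}|^{1/p'}$ equals $r^{-\delta}|B(0,1)|^{1/p'}\,s\bigl(s^{-N/(N-\alpha)}-1\bigr)^{1/p'}$, and since $s(s^{-N/(N-\alpha)}-1)^{1/p'}<s^{\delta/(N-\alpha)}<1$ for $s\in(0,1)$ (with the supremum $1$ never attained), the weak quasi-norm is strictly less than $|B(0,1)|^{1/p'} r^{-\delta}$. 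So the ``careful optimization'' you suggest in the last step will \emph{not} recover the constant $|B(0,1)|^{1/p'}$ exactly; what is true, and what you already proved via the annulus-to-ball enclosure, is the upper bound. You are correct that only the upper bound is used downstream (in Lemma~\ref{H1} to control $J_2$), so the discrepancy is harmless, but the equal sign in the lemma as stated should be read as $\leq$, and your version of the argument is in fact cleaner than the paper's.
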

\begin{proof}
We begin with the observation that
\begin{align*}
\left|\left\{ x \in B(0,r)^c : \frac{1}{|x|^{N-\alpha}} >t\right\}\right| = 0
\end{align*}
if $t>r^{\alpha-N}$, while in the case $t \leq r^{\alpha-N}$ we have
\begin{align*}
t \left|\left\{ x \in B(0,r)^c : \frac{1}{|x|^{N-\alpha}} >t\right\}\right|^{1/p'} = t^{1-\frac{N}{(N-\alpha)p'}}|B(0,1)|^{1/p'}.
\end{align*}
Therefore we deduce that
\begin{align*}
\sup_{t>0}t \left|\left\{ x \in B(0,r)^c : \frac{1}{|x|^{N-\alpha}} >t\right\}\right|^{1/p'} &= r^{(\alpha-N)\left(1-\frac{N}{(N-\alpha)p'}\right)} |B(0,1)|^{1/p'}\\
&=r^{-\delta}|B(0,1)|^{1/p'},
\end{align*}
which is the desired conclusion.
\end{proof}
\begin{lemma}\label{lorentzholder}
Let $f \in L^{N/\alpha,q}(\Omega)$ and suppose that $\frac{1}{2}\left(N/\alpha+1\right) \leq p<N/\alpha$.  Then $f \in L^{p,1}(\Omega)$ and there exists a constant $C_1=C_1(\alpha,N,q,\Omega)>0$ such that
\begin{align*}
\|f\|_{L^{p,1}(\Omega)} \leq C_1\delta_p^{-1/q'} \|f\|_{L^{N/\alpha,q}(\Omega)} ,
\end{align*}
where $\delta_p:= N/p-\alpha>0$.  Moreover, we can choose $C_1$ such that 
\begin{align*}
C_1\delta_p^{-1/q'} \geq 1.
\end{align*}
\end{lemma}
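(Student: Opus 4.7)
The strategy is to apply the Lorentz-scale Hölder inequality (Theorem~\ref{holder}) to the factorization $f = f \cdot \chi_\Omega$. I would choose Hölder exponents $p_1 = N/\alpha$, $q_1 = q$ for the first factor and $p_2 = N/\delta_p$, $q_2 = q'$ for the second, so that
$$\frac{1}{p_1} + \frac{1}{p_2} = \frac{\alpha}{N} + \frac{\delta_p}{N} = \frac{1}{p}, \qquad \frac{1}{q_1} + \frac{1}{q_2} = \frac{1}{q} + \frac{1}{q'} = 1,$$
which exactly match the Lorentz target $L^{p,1}$. The condition $p > 1$ required by Theorem~\ref{holder} is ensured by $p \geq \tfrac12(N/\alpha+1) > 1$, and $q' < \infty$ because $q > 1$.

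The next step is to compute $|||\chi_\Omega|||_{L^{N/\delta_p,q'}(\mathbb{R}^N)}$ directly. Since the non-increasing rearrangement of $\chi_\Omega$ is $(\chi_\Omega)^*(t) = \chi_{[0,|\Omega|]}(t)$, a one-line integration yields
$$|||\chi_\Omega|||_{L^{N/\delta_p,q'}(\mathbb{R}^N)} = \left(\frac{N}{q'\delta_p}\right)^{1/q'} |\Omega|^{\delta_p/N}.$$
This is precisely the step where the singular factor $\delta_p^{-1/q'}$ is produced; the choice $q_2 = q'$ is what pins down the exponent $1/q'$ on $\delta_p$ rather than any other power. Combining this with Theorem~\ref{holder} and the trivial bound $|||f||| \leq \|f\|$ gives
$$\|f\|_{L^{p,1}(\Omega)} \leq e^{1/e}\, p' \left(\frac{N}{q'\delta_p}\right)^{1/q'} |\Omega|^{\delta_p/N} \|f\|_{L^{N/\alpha,q}(\Omega)}.$$

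Finally, I would use the lower bound $p \geq \tfrac12(N/\alpha+1)$ to absorb every factor except $\delta_p^{-1/q'}$ into a constant depending only on $\alpha, N, q, \Omega$. This hypothesis forces $\delta_p \leq \alpha(N-\alpha)/(N+\alpha)$, which in turn gives the uniform bounds $p' \leq 2N/(N-\alpha)$, a uniform bound on $|\Omega|^{\delta_p/N}$ in terms of $|\Omega|$, $\alpha$, $N$, and, crucially, the positive lower bound
$$\delta_p^{-1/q'} \geq \left(\frac{N+\alpha}{\alpha(N-\alpha)}\right)^{1/q'} > 0.$$
The first two observations package everything but $\delta_p^{-1/q'}$ into $C_1 = C_1(\alpha,N,q,\Omega)$, and the third lets us enlarge $C_1$ so that $C_1 \delta_p^{-1/q'} \geq 1$, delivering the ``moreover'' clause.

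The proof is really a careful Hölder inequality with explicit exponent tracking, so there is no serious analytic obstacle; the only subtle point is recognizing that the lower bound $p \geq \tfrac12(N/\alpha+1)$ is exactly what is needed both to keep $p'$ and $|\Omega|^{\delta_p/N}$ uniformly bounded and to get a uniform lower bound on $\delta_p^{-1/q'}$, so that the inequality can be stated cleanly with a single constant $C_1$ and the singular weight $\delta_p^{-1/q'}$ isolated.
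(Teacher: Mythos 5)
Your proposal is correct and follows essentially the same route as the paper: factor $f = f\chi_\Omega$, apply Theorem~\ref{holder} with exponents $(N/\alpha, q)$ and $(N/\delta_p, q')$, compute the rearrangement of $\chi_\Omega$ explicitly to isolate the factor $\delta_p^{-1/q'}$, and use the lower bound $p\geq p_0 := \tfrac12(N/\alpha+1)$ to control $p'$, $|\Omega|^{\delta_p/N}$, and $\delta_p^{-1/q'}$ from below. (The paper's slightly sharper bound $p'\leq (N+\alpha)/(N-\alpha)$ in place of your $2N/(N-\alpha)$ is immaterial.)
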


\begin{proof}
By our slight variation of O'Neil's version of H\"older's inequality in Lorentz spaces, see Theorem \ref{holder} in Section \ref{preliminaries} above, we have
\begin{align*}
\|f\chi_\Omega\|_{L^{p,1}(\mathbb{R}^N)} \leq e^{1/e}p'|||f\chi_\Omega|||_{L^{N/\alpha,q}(\mathbb{R}^N)} |||\chi_\Omega|||_{L^{r,q'}(\mathbb{R}^N)}
\end{align*}
where
\begin{align*}
&\frac{1}{p}=\frac{\alpha}{N}+\frac{1}{r} \\
&1=\frac{1}{q}+\frac{1}{q'}.
\end{align*}
We compute
\begin{align*}
|||\chi_\Omega|||^{q'}_{L^{r,q'}(\mathbb{R}^N)} &= |\Omega|^{q'/r} r \int_0^1t^{q'-1}\;dt \\
&= |\Omega|^{q'/r} \frac{r}{q'},
\end{align*}
which combined with the fact that $r=N/\delta_p$ yields
\begin{align*}
\|f\|_{L^{p,1}(\Omega)} \leq e^{1/e}p'  |\Omega|^{\delta_p/N} \left(\frac{N}{q'\delta_p}\right)^{1/q'} \|f\|_{L^{N/\alpha,q}(\Omega)}.
\end{align*}

Define $p_0:=\frac{1}{2}\left(N/\alpha+1\right)$.  Then the assumption $p_0\leq p$ implies firstly that
\begin{align*}
p' &\leq \frac{\frac{1}{2}\left(N/\alpha+1\right)}{\frac{1}{2}\left(N/\alpha+1\right)-1} \\
&=\frac{N/\alpha+1}{N/\alpha+1-2} \\
&=\frac{N +\alpha}{N-\alpha},
\end{align*}
and so
\begin{align*}
\|f\|_{L^{p,1}(\Omega)} \leq e^{1/e}\frac{N +\alpha}{N-\alpha}  \max\{|\Omega|,1\} \left(\frac{N}{q'}\right)^{1/q'}  \delta_p^{-1/q'} \|f\|_{L^{N/\alpha,q}(\Omega)}.
\end{align*}
Therefore the estimate holds with
\begin{align*}
C_1:=\max\left\{e^{1/e}\frac{N +\alpha}{N-\alpha}  \max\{|\Omega|,1\} \left(\frac{N}{q'}\right)^{1/q'}, \;\;\delta_{p_0}^{1/q'}\right\}.
\end{align*}
\end{proof}

\begin{lemma}\label{lebesgueholder}
Under the hypothesis of Lemma \ref{lorentzholder} let  $f \in L^{s}(\Omega)$ for some $1<s<N/\alpha$. Then
\begin{align*}
\|f\|_{L^{s}(\Omega)} \leq C_1'\|f\|_{L^{N/\alpha,q}(\Omega)},
\end{align*}
where $\eta:= N/s-\alpha>0$ and $C_1'=C_1'(\alpha,N,s,|\Omega|)>0$.
\end{lemma}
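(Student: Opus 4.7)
The plan is to prove Lemma \ref{lebesgueholder} by a direct application of H\"older's inequality in Lorentz spaces (Theorem \ref{holder}), in essentially the same spirit as Lemma \ref{lorentzholder}. Since here $s$ is fixed (rather than a variable $p$ approaching the critical endpoint $N/\alpha$), the proof is actually easier: there is no need for a moreover clause or for careful control of how the constant behaves as $\eta\to 0$, because $\eta$ is determined by $s$.

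First, I would record the identification
\[
|||f|||_{L^{s,s}(\mathbb{R}^N)} \;=\; \left(\int_0^\infty f^*(t)^s\,dt\right)^{1/s} \;=\; \|f\|_{L^s(\mathbb{R}^N)},
\]
so that the target $L^s$ norm is the Lorentz $L^{s,s}$ quasi-norm, and by the equivalence between $|||\cdot|||_{L^{s,s}}$ and $\|\cdot\|_{L^{s,s}}$ one has $\|f\|_{L^s(\Omega)} = |||f\chi_\Omega|||_{L^{s,s}(\mathbb{R}^N)} \leq \|f\chi_\Omega\|_{L^{s,s}(\mathbb{R}^N)}$.

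Second, I would factor $f = (f\chi_\Omega)\cdot \chi_\Omega$ and select exponents compatible with Theorem \ref{holder}: take $p_1=N/\alpha$, $p_2=N/\eta$, so that $\tfrac{1}{p_1}+\tfrac{1}{p_2}=\tfrac{\alpha}{N}+\tfrac{\eta}{N}=\tfrac{1}{s}$, and take $q_1=q$, $q_2=q'$, so that $\tfrac{1}{q_1}+\tfrac{1}{q_2}=1\geq \tfrac{1}{s}$ (since $s>1$). Theorem \ref{holder} then delivers
\[
\|f\chi_\Omega\|_{L^{s,s}(\mathbb{R}^N)} \;\leq\; e^{1/e}\, s'\, |||f\chi_\Omega|||_{L^{N/\alpha,q}(\mathbb{R}^N)}\, |||\chi_\Omega|||_{L^{N/\eta,q'}(\mathbb{R}^N)}.
\]

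Third, I would compute the characteristic-function factor exactly as in Lemma \ref{lorentzholder}: for $q'<\infty$,
\[
|||\chi_\Omega|||_{L^{N/\eta,q'}(\mathbb{R}^N)}^{q'} \;=\; |\Omega|^{q'\eta/N}\,\frac{N}{\eta q'},
\]
and for $q'=\infty$ it reduces to $|\Omega|^{\eta/N}$. Combining these estimates produces the claimed inequality with
\[
C_1' \;=\; C_1'(\alpha,N,s,|\Omega|,q) \;=\; e^{1/e}\,s'\,|\Omega|^{\eta/N}\left(\frac{N}{\eta q'}\right)^{1/q'}
\]
(and the obvious modification when $q=\infty$).

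There is no real obstacle here. The only point worth flagging is that, unlike in Lemma \ref{lorentzholder}, the exponent $\eta=N/s-\alpha$ is fixed and bounded away from zero, so the factor $\eta^{-1/q'}$ is harmless and no auxiliary argument (analogous to the lower bound $p\geq \tfrac{1}{2}(N/\alpha+1)$) is required.
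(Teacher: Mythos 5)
Your proof is correct and follows essentially the same route as the paper: factor $f = (f\chi_\Omega)\cdot\chi_\Omega$, apply Theorem~\ref{holder} with $p_1 = N/\alpha$ and $p_2 = N/\eta$, and compute the Lorentz quasi-norm of $\chi_\Omega$ explicitly. The one difference is your choice of secondary exponents: you take $q_1 = q$, $q_2 = q'$, giving $\tfrac{1}{q_1}+\tfrac{1}{q_2}=1\geq\tfrac{1}{s}$, whereas the paper takes $q_1 = q$, $q_2 = 1$, using the weaker inequality $\tfrac{1}{q}+1\geq\tfrac{1}{s}$. The paper's choice has the small advantage that the resulting constant $e^{1/e}\,s'\,r\,|\Omega|^{1/r}$ (with $r=N/\eta$) is manifestly independent of $q$, which is what the lemma asserts when it writes $C_1'=C_1'(\alpha,N,s,|\Omega|)$; your constant carries a harmless but avoidable $q$-dependence through $(N/(\eta q'))^{1/q'}$, which you correctly flagged. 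Since $q$ is fixed in the application (Theorem~\ref{main2}), this does not affect anything downstream, but if you want to match the stated dependence exactly, switch to $q_2=1$.
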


\begin{proof}
The proof is analogous to the previous one. Indeed, given $s \in (1,N/\alpha)$ we define $r$ by the relation
\begin{align*}
\frac{1}{s}=\frac{\alpha}{N}+\frac{1}{r}.
\end{align*}
Note that for any choice of $s$ in this range and any $q \in (1,\infty]$ one has
\begin{align*}
\frac{1}{s}\leq\frac{1}{q}+1.
\end{align*}
Therefore we can apply Theorem \ref{holder} to deduce the inequality
\begin{align*}
||| f \chi_\Omega|||_{L^s(\mathbb{R}^N)} \leq \|f\chi_\Omega\|_{L^{s,s}(\mathbb{R}^N)} \leq e^{1/e}s'|||f\chi_\Omega|||_{L^{N/\alpha,q}(\mathbb{R}^N)} |||\chi_\Omega|||_{L^{r,1}(\mathbb{R}^N)}.
\end{align*}
As above
\begin{align*}
|||\chi_\Omega|||_{L^{r,1}(\mathbb{R}^N)} &=r |\Omega|  \int_0^1\;dt \\
&= r |\Omega|,
\end{align*}
and the result follows from the fact that $r=N/\eta$.
\end{proof}

The following estimate is in the spirit of Hedberg's lemma \cite{Hedberg}, while a variant has been argued by Adams in \cite{Adams1975}.

\begin{lemma}[Hedberg]\label{H1}
Under the hypothesis of Lemma \ref{lorentzholder}, for every $\epsilon \in (0,\alpha)$ one has the inequality
\[|I_{\alpha}f(x)|\leq C_4 \mathcal{M}_{\alpha-\epsilon}f(x)^{\frac{\delta}{\delta+\epsilon }}\|f\|_{L^{p,1}(\mathbb{R}^N)}^{\frac{\epsilon }{\epsilon+\delta}}\]
for some $C_4=C_4(N,\alpha,\epsilon)>0$ independent of $\delta$.
\end{lemma}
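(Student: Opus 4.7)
My plan is to adapt the classical Hedberg interpolation trick to the Lorentz setting provided by the preceding preliminaries. For a scale $r>0$ to be chosen later, I split
\[
\gamma(\alpha)|I_\alpha f(x)|\leq \int_{B(x,r)}\frac{|f(y)|}{|x-y|^{N-\alpha}}\,dy+\int_{\mathbb{R}^N\setminus B(x,r)}\frac{|f(y)|}{|x-y|^{N-\alpha}}\,dy=:A(r)+B(r),
\]
control $A(r)$ by $\mathcal{M}_{\alpha-\epsilon}f(x)$ and $B(r)$ by $\|f\|_{L^{p,1}(\mathbb{R}^N)}$, and then balance the two pieces by choosing $r$ appropriately.

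For $A(r)$, I decompose $B(x,r)$ into dyadic annuli $A_j=\{2^{-j-1}r\leq |x-y|<2^{-j}r\}$ for $j\geq 0$. On $A_j$ I estimate $|x-y|^{-(N-\alpha)}\leq (2^{-j-1}r)^{-(N-\alpha)}$, and then write
\[
\int_{B(x,2^{-j}r)}|f|\,dy\leq (2^{-j}r)^{N-(\alpha-\epsilon)}\,\mathcal{M}_{\alpha-\epsilon}f(x)
\]
by the very definition of the fractional maximal function. Summing the resulting geometric series in $2^{-j\epsilon}$ gives $A(r)\leq C(N,\alpha,\epsilon)\,r^\epsilon\,\mathcal{M}_{\alpha-\epsilon}f(x)$ with a constant independent of $p$.

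For $B(r)$, I invoke Theorem \ref{holderprime} with the exponent pairs $(p,1)$ and $(p',\infty)$ applied to $f$ and the truncated kernel, and then apply Lemma \ref{lorentzriesz} to evaluate the weak quasi-norm of the kernel:
\[
B(r)\leq e^{1/e}\,\|f\|_{L^{p,1}(\mathbb{R}^N)}\,\left|\left|\left|\frac{\chi_{B(0,r)^c}}{|\cdot|^{N-\alpha}}\right|\right|\right|_{L^{p',\infty}(\mathbb{R}^N)}=e^{1/e}|B(0,1)|^{1/p'}\,\|f\|_{L^{p,1}(\mathbb{R}^N)}\,r^{-\delta}.
\]
Combining the two estimates I arrive at
\[
|I_\alpha f(x)|\leq C\bigl(r^\epsilon\,\mathcal{M}_{\alpha-\epsilon}f(x)+r^{-\delta}\,\|f\|_{L^{p,1}(\mathbb{R}^N)}\bigr),
\]
and optimizing $r$ by setting the two summands equal gives $r^{\epsilon+\delta}=\|f\|_{L^{p,1}(\mathbb{R}^N)}/\mathcal{M}_{\alpha-\epsilon}f(x)$, which produces the stated bound with exponents $\delta/(\epsilon+\delta)$ and $\epsilon/(\epsilon+\delta)$.

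The only subtlety I expect is verifying that the constant $C_4$ can indeed be chosen independent of $\delta$. The near piece contributes $(1-2^{-\epsilon})^{-1}\cdot 2^{N-\alpha}/\gamma(\alpha)$, which depends only on $N,\alpha,\epsilon$. For the far piece the potentially worrying factor is $|B(0,1)|^{1/p'}$, but the hypothesis of Lemma \ref{lorentzholder} confines $p$ to $[\tfrac12(N/\alpha+1),N/\alpha)$, so $p'$ stays in a bounded interval (in particular $p'\to N/(N-\alpha)$ as $\delta\downarrow 0$) and the factor is uniformly bounded. Thus the entire $\delta$-dependence in the far piece is isolated inside the factor $r^{-\delta}$ and is absorbed by the optimization, giving the desired $\delta$-free constant $C_4=C_4(N,\alpha,\epsilon)$.
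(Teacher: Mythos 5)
Your proof is correct and follows essentially the same approach as the paper: the same Hedberg split into near and far pieces, the same dyadic estimate for the near piece via $\mathcal{M}_{\alpha-\epsilon}$, the same invocation of Theorem \ref{holderprime} together with Lemma \ref{lorentzriesz} for the far piece, and the same optimization in $r$. Your observation that $|B(0,1)|^{1/p'}$ stays uniformly bounded is handled in the paper by simply bounding it by $\max\{|B(0,1)|,1\}$; both resolutions are equivalent.
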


\begin{proof} 
We begin splitting the Riesz potential in two integrals as follows
\[\begin{aligned}
I_{\alpha}f(x)&=\frac{1}{\gamma(\alpha)}\int_{\mathbb{R}^N}\frac{f(y)}{|x-y|^{N-\alpha}}dy\\
&=\frac{1}{\gamma(\alpha)}\int_{B(x,r)}\frac{f(y)}{|x-y|^{N-\alpha}}dy+\frac{1}{\gamma(\alpha)}\int_{B(x,r)^c}\frac{f(y)}{|x-y|^{N-\alpha}}dy\\
&=J_1(x)+J_2(x).
\end{aligned}\]
We will estimate them separately and will conclude optimizing the choice of the parameter $r$. The first integral can be estimated as follows
\[\begin{aligned}
|J_1(x)|&\leq\frac{1}{\gamma(\alpha)}\sum_{n=0}^{\infty}\int_{B(x,r2^{-n})\setminus B(x,r2^{-n-1})}\frac{|f(y)|}{|x-y|^{N-\alpha}}dy\\
&\leq\frac{1}{\gamma(\alpha)}\sum_{n=0}^{\infty}\frac{(r2^{-n})^N}{(r2^{-n})^{\alpha-\epsilon}}(r2^{-n})^{\alpha-\epsilon}\fint_{B(x,r2^{-n})}|f(y)|\frac{1}{(r2^{-n-1})^{N-\alpha}}dy\\
&\leq r^{\epsilon} \frac{2^{N-\alpha}}{\gamma(\alpha)}\sum_{n=0}^{\infty}(2^{-n})^{\epsilon}\mathcal{M}_{\alpha-\epsilon}(f)(x)\\
&\leq C_2(N,\alpha,\epsilon)r^{\epsilon}\mathcal{M}_{\alpha-\epsilon}f(x),
\end{aligned}\]
where we are using the fractional maximal function, i.e.
\[\mathcal{M}_{\beta}f(x)=\sup_{r>0}r^{\beta}\fint_{B(x,r)}|f(y)|dy.\]
On the other hand, the second integral can be estimated using Theorem \ref{holderprime} (H\"older's inequality in the $L^1$ regime) and Lemma \ref{lorentzriesz}
\[\begin{aligned}
|J_2(x)|&\leq  e^{1/e}\left| \left| \left| \frac{\chi_{B(0,r)^c}}{|\cdot|^{N-\alpha}} \right| \right| \right|_{L^{p',\infty}(\mathbb{R}^N)} |||f|||_{L^{p,1}(\mathbb{R}^N)}\\
&\leq e^{1/e}|B(0,1)|^{1/p'} r^{-\delta}\|f\|_{L^{p,1}(\mathbb{R}^N)} \\
&\leq C_3 r^{-\delta}\|f\|_{L^{p,1}(\mathbb{R}^N)}.
\end{aligned}\]
where 
\begin{align*}
C_3=e^{1/e}\max\{|B(0,1)|,1\}.
\end{align*}
One can then optimize in $r$, however for our purposes simply setting the upper bounds we have proved for $J_1,J_2$ is sufficient.  In particular, from the choice
\[r(x)=\left(\frac{ C_3}{C_2}\frac{\|f\|_{L^{p,1}(\mathbb{R}^N)}}{\mathcal{M}_{\alpha-\epsilon}f(x)}\right)^{\frac{1}{\epsilon +\delta}}\]
one deduces the inequality
\[|I_{\alpha}f(x)|\leq C_4 \mathcal{M}_{\alpha-\epsilon}f(x)^{\frac{\delta}{\delta+\epsilon }}\|f\|_{L^{p,1}(\mathbb{R}^N)}^{\frac{\epsilon }{\epsilon+\delta}}\]
where we have used Young's inequality to estimate \begin{align*}
2C_2^{\frac{\delta}{\delta+\epsilon }}C_3^{\frac{\epsilon}{(\delta+\epsilon)}} \leq 2(C_2+C_3)=:C_4
\end{align*}
which is independent of $\delta$ and \textit{a posteriori} of $p$.
\end{proof}

Let us next recall a weak-type estimate for the fractional maximal function with respect to the Hausdorff content.   
\begin{lemma}\label{FKR}
Let $\gamma\in [0,N)$. There exists a constant $C_5=C_5(N,\gamma)>0$ such that
\[\mathcal{H}_{\infty}^{N-\gamma}(\{x:\mathcal{M}_{\gamma}f(x)>t\})\leq\frac{C_5}{t}\|f\|_{L^1(\mathbb{R}^N)}\]
\end{lemma}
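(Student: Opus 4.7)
\medskip\noindent\textbf{Proof proposal for Lemma \ref{FKR}.}

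The plan is to use a Vitali–type covering argument applied directly to the covering which witnesses the definition of the Hausdorff content. For each point $x$ in the super-level set $E_t:=\{\mathcal{M}_{\gamma}f>t\}$, the definition of $\mathcal{M}_\gamma$ supplies a radius $r_x>0$ with
\[
r_x^{\gamma}\fint_{B(x,r_x)}|f(y)|\,dy>t,
\]
equivalently,
\[
\omega_N\, t\, r_x^{N-\gamma}<\int_{B(x,r_x)}|f(y)|\,dy.
\]
A first key observation is that this immediately forces $r_x\leq\bigl(\|f\|_{L^1(\mathbb{R}^N)}/(\omega_N t)\bigr)^{1/(N-\gamma)}$, so the family $\{B(x,r_x):x\in E_t\}$ has uniformly bounded radii.

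Next, I would apply the standard Vitali $5r$–covering lemma (valid precisely because of the radius bound above) to extract a countable pairwise disjoint subfamily $\{B(x_i,r_i)\}_{i\in I}$ such that
\[
E_t\subset\bigcup_{x\in E_t}B(x,r_x)\subset\bigcup_{i\in I}B(x_i,5r_i).
\]
This covering is admissible in the definition of $\mathcal{H}_\infty^{N-\gamma}$, so
\[
\mathcal{H}_\infty^{N-\gamma}(E_t)\leq\sum_{i\in I}\omega_{N-\gamma}(5r_i)^{N-\gamma}
=5^{N-\gamma}\omega_{N-\gamma}\sum_{i\in I}r_i^{N-\gamma}.
\]

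To conclude, I would substitute the selection inequality $r_i^{N-\gamma}<\frac{1}{\omega_N t}\int_{B(x_i,r_i)}|f|$ and exploit disjointness of the selected balls:
\[
\mathcal{H}_\infty^{N-\gamma}(E_t)\leq\frac{5^{N-\gamma}\omega_{N-\gamma}}{\omega_N\,t}\sum_{i\in I}\int_{B(x_i,r_i)}|f(y)|\,dy\leq\frac{5^{N-\gamma}\omega_{N-\gamma}}{\omega_N\,t}\|f\|_{L^1(\mathbb{R}^N)},
\]
so the lemma follows with $C_5=5^{N-\gamma}\omega_{N-\gamma}/\omega_N$.

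There is really no substantial obstacle here; the only subtlety worth highlighting is verifying the radius bound that legitimizes Vitali's lemma. The case $\gamma=0$ recovers (a variant of) the classical weak–$(1,1)$ bound for the Hardy–Littlewood maximal function, and the whole argument is essentially the same, with $\mathcal{H}_\infty^{N-\gamma}$ replacing Lebesgue measure and the quantity $\omega_N r^{N-\gamma}t$ playing the role of the lower bound on the ball integral.
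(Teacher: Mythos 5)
Your proof is correct and follows essentially the same route as the paper's: select a ball of radius $r_x$ at each point of the super-level set witnessing $\mathcal{M}_\gamma f(x)>t$, observe the radii are uniformly bounded so the Vitali $5r$-covering lemma applies, extract a disjoint subfamily, and bound the Hausdorff content of the $5r$-cover using the selection inequality and disjointness. You even arrive at the same constant $C_5=5^{N-\gamma}\omega_{N-\gamma}/\omega_N$, and your explicit radius bound $r_x\leq(\|f\|_{L^1}/(\omega_N t))^{1/(N-\gamma)}$ just spells out the finiteness of $\sup r_x$ that the paper asserts.
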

We provide a proof for the convenience of the reader (see also \cite{BagbyZiemer}).

\begin{proof}
Define 
\begin{align*}
E_t:= \{x:\mathcal{M}_{\gamma}f(x)>t\},
\end{align*}
and note that by lower-semicontinuity of the fractional maximal function $E_t$ is an open set.  By the definition of the fractional maximal function, $\mathcal{M}_{\gamma}$, for any $x\in E_t$ there is a radii $r_x$ such that
\begin{align}\label{radiichoice}
r_x^{\gamma}\fint_{B(x,r_x)}|f(y)|dy>t.
\end{align}
Then 
\begin{align*}
E_t \subset \bigcup_{x \in E_t} \overline{B(x,r_x)},
\end{align*}
while the inequality \eqref{radiichoice} implies that
\begin{align*}
\sup_{x \in E_t} r_x<+\infty.
\end{align*}
Therefore, we may apply Vitali's covering theorem (see, e.g. \cite[Theorem 1 on p.~27]{EvansGariepy}) to find a countable subcollection of disjoint balls such that
\begin{align*}
E\subset \bigcup_{i=1}^\infty \overline{B(x_i,5r_i)}
\end{align*}
From this and the definition of the Hausdorff content we find
\begin{align*}
\mathcal{H}_{\infty}^{N-\gamma}(E)&\leq\sum_{i=1}^\infty \omega_{N-\gamma}(5r_i)^{N-\gamma} \\
&\leq \frac{1}{t} \frac{  \omega_{N-\gamma} 5^{N-\beta}}{|B(0,1)|} \sum_{i=1}^\infty \int_{B(x_i,r_i)}|f(y)|dy\\
&\leq\frac{C_5}{t}\|f\|_{L^1(\mathbb{R}^N)},
\end{align*}
the last inequality holds because the selected balls are disjoint.  This completes the proof, with
\begin{align*}
C_5:=\frac{  \omega_{N-\gamma} 5^{N-\beta}}{|B(0,1)|}.
\end{align*}
\end{proof}

\section{Proofs of the Main Results}\label{proofs}

We are now prepared to prove the main result of this paper, Theorem \ref{main2}, from which we will deduce Theorems \ref{main1} and \ref{main}, as well as Corollaries \ref{cor1} and \ref{cor2}.

\begin{proof}[Proof of Theorem \ref{main2}]
We begin with the elementary inequality, for $r>1$, 
\[\mathcal{M}_{\alpha-\epsilon}f(x)\leq (M_{r(\alpha-\epsilon)} |f|^{r}(x))^{1/r}\] 
which together with Lemma \ref{H1} implies that
\begin{align*}
\{x:|I_{\alpha}f(x)|>t\} \subset \{x: t< C_4 \mathcal{M}_{r(\alpha-\epsilon)}|f|^r(x)^{\frac{\delta}{(\delta+\epsilon)r }}\|f\|_{L^{p,1}(\mathbb{R}^N)}^{\frac{\epsilon }{\epsilon+\delta}}\}.
\end{align*}
It is convenient to rewrite this inclusion as
\begin{align*}
\{x:|I_{\alpha}f(x)|>t\} \subset \left\{x: \left(\frac{t}{C_4\|f\|_{L^{p,1}(\mathbb{R}^N)}^{\frac{\epsilon }{\epsilon+\delta}}} \right)^{\frac{(\delta+\epsilon)r}{ \delta}}< \mathcal{M}_{r(\alpha-\epsilon)}|f|^r(x)\right\}
\end{align*}
in order to invoke Lemma \ref{FKR}.  In particular, for any $\beta \in (0,N]$, we may choose $\epsilon \in (0,\alpha]$, $r \in (1,N/\alpha)$ such that $N-\beta=r(\alpha-\epsilon)$ and $N/r-\alpha=\beta/r-\epsilon>0$, from which we deduce
\[\begin{aligned}
\mathcal{H}^\beta_\infty(\{x:|I_{\alpha}f(x)|>t\})&\leq C_5 \left(\frac{C_4\|f\|_{L^{p,1}(\mathbb{R}^N)}^{\frac{\epsilon }{\epsilon+\delta}}}{t} \right)^{\frac{(\delta+\epsilon)r}{ \delta}} \int_{\mathbb{R}^N} |f|^r
\end{aligned}.\]
We recall the fact that $\operatorname*{\supp}f \subset \Omega$ to write $f=f\chi_\Omega$ and utilize Lemma \ref{lorentzholder} to obtain the inequality
\begin{align*}
\|f\|_{L^{p,1}(\Omega)} \leq C_1\delta^{-1/q'} \|f\|_{L^{N/\alpha,q}(\Omega)},
\end{align*}
and Lemma \ref{lebesgueholder} with $s=r>1$ to obtain the inequality
\begin{align*}
\|f\|_{L^{r}(\Omega)} \leq C_1'(\alpha,N,r,|\Omega|) \|f\|_{L^{N/\alpha,q}(\Omega)},
\end{align*}
which combined yield the estimate 
\begin{align*}
\mathcal{H}^\beta_\infty(\{x\in\Omega:|I_{\alpha}f(x)|>t\}) &\leq C_5(C_1')^r\left(\frac{C_4 C_1\delta^{-1/q'}}{t} \right)^{\frac{(\delta+\epsilon)r}{ \delta}}\|f\|_{L^{N/\alpha,q}(\Omega)}^{r+\frac{\epsilon r}{\delta}}\\
&\leq C_6\left(\frac{C_4 C_1\delta^{-1/q'}}{t} \right)^{\frac{(\delta+\epsilon)r}{ \delta}},
\end{align*}
where we have used the fact that $C_1\delta^{-1/q'} \geq 1$, the assumption that $\|f\|_{L^{N/\alpha,q}(\Omega)}\leq 1$, and $C_6:=C_5(C_1')^r$.  

For $t$ sufficiently large, we will choose $\delta=\delta(t)>0$  such that
\begin{align}\label{deltaequation}
\frac{C_4 C_1\delta^{-1/q'} }{t} = \exp(-1).
\end{align}
This is possible whenever $0<\delta(t) \leq \delta_0(\alpha,N,q,|\Omega|,\epsilon)$ with $\delta_0$ chosen sufficiently small, which is to say that $p$ must be chosen sufficiently close to $N/\alpha$.  In particular, recalling $p_0=\frac{1}{2}(N/\alpha+1)$, we can do so for all
\begin{align*}
t \geq t_0:=\frac{C_4 C_1\delta_{p_0}^{-1/q'}}{\exp(-1)}
\end{align*}

For such $t$ this implies
\begin{align*}
\mathcal{H}^\beta_\infty(\{x \in \Omega :|I_{\alpha}f(x)|>t\}) &\leq  C_6\exp \left(-\frac{(\delta+\epsilon)r}{ \delta} \right) \\
&\leq C_6\exp \left(-\frac{\epsilon r}{ \delta} \right).
\end{align*}
The choice of $\delta$ from equation \eqref{deltaequation} thus yields the estimate
\begin{align*}
\mathcal{H}^\beta_\infty(\{x \in \Omega :|I_{\alpha}f(x)|>t\}) 
&\leq C_6\exp(-ct^{q'})
\end{align*}
where
\begin{align*}
c= \epsilon r \left( \frac{\exp(-1)}{C_1C_4} \right)^{q'}.
\end{align*}

This concludes the proof for $t \geq t_0(\alpha, N,q,|\Omega|,\epsilon)$, while in the case $t\in (0,t_0)$ we have
\begin{align*}
\mathcal{H}^\beta_\infty(\{x \in \Omega :|I_{\alpha}f(x)|>t\})  &\leq \mathcal{H}^\beta_\infty(\Omega) \\
&= \mathcal{H}^\beta_\infty(\Omega) \exp(ct_0^{q'})\exp(-ct_0^{q'}) \\
&\leq \mathcal{H}^\beta_\infty(\Omega) \exp(ct_0^{q'})\exp(-ct^{q'}).
\end{align*}
In particular, the theorem holds with $c$ chosen as above and
\[C= \max\{C_6,\mathcal{H}^\beta_\infty(\Omega) \exp(ct_0^{q'})\}.\]

\end{proof}

We next show how one can deduce Theorems \ref{main1} and \ref{main} from Theorem \ref{main2}.
\begin{proof}[Proof of Theorems \ref{main1} and \ref{main}]
First we observe that
\begin{align*}
\|f\|_{L^{N/\alpha,N/\alpha}(\Omega)} \leq \frac{N}{N-\alpha}   ||| f|||_{L^{N/\alpha,N/\alpha}(\Omega)} \equiv  \frac{N}{N-\alpha} \|f\|_{L^{N/\alpha}(\Omega)}
\end{align*}
so that if $\|f\|_{L^{N/\alpha}(\Omega)} \leq 1$, $\|f\|_{L^{N/\alpha,N/\alpha}(\Omega)} \leq \frac{N}{N-\alpha}$.  Therefore by rescaling $f$ by this factor, Theorem \ref{main2} implies
\begin{align*}
\mathcal{H}^\beta_\infty(\{x \in \Omega :|I_{\alpha}f(x)|>t\}) \leq C e^{-\overline{c}t^{\frac{N}{N-\alpha}}}
\end{align*}
with $\overline{c}=\frac{c(N-\alpha)}{N}$.  This completes the proof of Theorem \ref{main}.  Theorem \ref{main1} also follows in the case $\beta=N$, up to a new constant $C$, by the equivalence of $\mathcal{H}^N_\infty$ and the Lebesgue measure $\mathcal{L}^N$.
\end{proof}

We conclude with the proofs of Corollaries \ref{cor1} and \ref{cor2}.

\begin{proof}[Proof of Corollaries \ref{cor1} and \ref{cor2}]
We compute, for $c'>0$ to be determined,
\begin{align*}
\int_\Omega \exp(c'|I_\alpha f|^{q'}) \;d\mathcal{H}^\beta_\infty &= \int_0^\infty \mathcal{H}^\beta_\infty( \{x\in \Omega : \exp(c'|I_\alpha f(x)|^{q'})>t\})\;dt\\
&= \int_0^\infty \mathcal{H}^\beta_\infty\left( \left\{x\in \Omega : |I_\alpha f(x)|>\left(\frac{\ln(t)}{c'}\right)^{1/q'}\right\}\right)\;dt.
\end{align*}

The integral for $t \in (0,1)$ can be estimated above by $\mathcal{H}^\beta_\infty(\Omega)$, while for $t \in (1,\infty)$ we utilize Theorem \ref{main2} to obtain
\begin{align*}
\int_\Omega \exp(c'|I_\alpha f|^{q'}) \;d\mathcal{H}^\beta_\infty &\leq \mathcal{H}^\beta_\infty(\Omega) + \int_1^\infty C\exp\left(-c\frac{\ln(t)}{c'}\right)\;dt\\
&=\mathcal{H}^\beta_\infty(\Omega) +C\int_1^\infty \frac{1}{t^{c/c'}}\;dt<+\infty
\end{align*}
as soon as $c'<c$.  The result follows with
\begin{align*}
C':=\mathcal{H}^\beta_\infty(\Omega) +C\int_1^\infty \frac{1}{t^{c/c'}}\;dt.
\end{align*}
This completes the proof of Corollary \ref{cor2}.  Corollary \ref{cor1} follows with a rescaling of the norm, as computed in the proof of Theorems \ref{main1} and \ref{main}.
\end{proof}

\section{Acknowledgments}
This work was initiated while the first named author was visiting the Nonlinear Analysis Unit in the Okinawa Institute of Science and Technology Graduate University. He warmly thanks OIST for the invitation and hospitality.
The first named author is supported by the National
Science Foundation under Grant No. DMS-1638352.  
\begin{bibdiv}

\begin{biblist}

\bib{Adams1973}{article}{
   author={Adams, David R.},
   title={Traces of potentials. II},
   journal={Indiana Univ. Math. J.},
   volume={22},
   date={1972/73},
   pages={907--918},
   issn={0022-2518},
   review={\MR{313783}},
   doi={10.1512/iumj.1973.22.22075},
}

\bib{Adams1975}{article}{
   author={Adams, David R.},
   title={A note on Riesz potentials},
   journal={Duke Math. J.},
   volume={42},
   date={1975},
   number={4},
   pages={765--778},
   issn={0012-7094},
   review={\MR{458158}},
}
\bib{Adams1988}{article}{
   author={Adams, David R.},
   title={A sharp inequality of J. Moser for higher order derivatives},
   journal={Ann. of Math. (2)},
   volume={128},
   date={1988},
   number={2},
   pages={385--398},
   issn={0003-486X},
   review={\MR{960950}},
   doi={10.2307/1971445},
}
\bib{AdamsChoquet}{article}{
   author={Adams, David R.},
   title={A note on Choquet integrals with respect to Hausdorff capacity},
   conference={
      title={Function spaces and applications},
      address={Lund},
      date={1986},
   },
   book={
      series={Lecture Notes in Math.},
      volume={1302},
      publisher={Springer, Berlin},
   },
   date={1988},
   pages={115--124},
   review={\MR{942261}},
   doi={10.1007/BFb0078867},
}

\bibitem{Beurling}
Beurling, A., {\em Etudes sur une probleme de majoration,} These, Almqvist and Wiksell,
Uppsala, 1933.

\bib{Brezis}{article}{
   author={Br\'{e}zis, H.},
   title={Laser beams and limiting cases of Sobolev inequalities},
   conference={
      title={Nonlinear partial differential equations and their
      applications. Coll\`ege de France Seminar, Vol. II},
      address={Paris},
      date={1979/1980},
   },
   book={
      series={Res. Notes in Math.},
      volume={60},
      publisher={Pitman, Boston, Mass.-London},
   },
   date={1982},
   pages={86--97},
   review={\MR{652508}},
}

\bib{BW}{article}{
   author={Br\'{e}zis, Ha\"{\i}m},
   author={Wainger, Stephen},
   title={A note on limiting cases of Sobolev embeddings and convolution
   inequalities},
   journal={Comm. Partial Differential Equations},
   volume={5},
   date={1980},
   number={7},
   pages={773--789},
   issn={0360-5302},
   review={\MR{579997}},
   doi={10.1080/03605308008820154},
}

\bib{BagbyZiemer}{article}{
   author={Bagby, Thomas},
   author={Ziemer, William P.},
   title={Pointwise differentiability and absolute continuity},
   journal={Trans. Amer. Math. Soc.},
   volume={191},
   date={1974},
   pages={129--148},
   issn={0002-9947},
   review={\MR{344390}},
   doi={10.2307/1996986},
}

\bibitem{CarlesonChang}
Carleson, L.; Chang, S.-Y. A., {\em On the existence of an extremal function for an
inequality of J. Moser,} Bull. Sci. Math., 2e serie, 110 (1985), pp. 113-127.

\bibitem{ChangMarshall} 
Chang,  S.-Y. A.; Marshall, D. E., {\em A sharp inequality concerning the Dirichlet
integral}, Amer. J. Math., 107 (1985), pp. 1015--1033.

\bibitem{ChangYang}
Chang, S.-Y. A.; Yang, P. C. P, {\em Prescribing Gaussian curvature on $S^2$}, Acta Math.159(1987), no. 3-4, pp. 215-259.

\bib{ChangYang2003}{article}{
   author={Chang, Sun-Yung Alice},
   author={Yang, Paul C.},
   title={he inequality of Moser and Trudinger and applications toconformal geometry},
   journal={Comm. Pure Appl. Math.},
   volume={56},
   date={2003},
   pages={1135–-1150
129--148},
   review={\MR{1989228}},
}

\bib{EvansGariepy}{book}{
   author={Evans, Lawrence C.},
   author={Gariepy, Ronald F.},
   title={Measure theory and fine properties of functions},
   series={Studies in Advanced Mathematics},
   publisher={CRC Press, Boca Raton, FL},
   date={1992},
   pages={viii+268},
   isbn={0-8493-7157-0},
   review={\MR{1158660}},
}

\bib{FeffermanStein}{article}{
   author={Fefferman, C.},
   author={Stein, E. M.},
   title={$H^{p}$ spaces of several variables},
   journal={Acta Math.},
   volume={129},
   date={1972},
   number={3-4},
   pages={137--193},
   issn={0001-5962},
   review={\MR{447953}},
   doi={10.1007/BF02392215},
}

\bib{GS}{article}{
   author={Garg, Rahul},
   author={Spector, Daniel},
   title={On the regularity of solutions to Poisson's equation},
   journal={C. R. Math. Acad. Sci. Paris},
   volume={353},
   date={2015},
   number={9},
   pages={819--823},
   issn={1631-073X},
   review={\MR{3377679}},
   doi={10.1016/j.crma.2015.07.001},
}

\bib{GS1}{article}{
   author={Garg, Rahul},
   author={Spector, Daniel},
   title={On the role of Riesz potentials in Poisson's equation and Sobolev
   embeddings},
   journal={Indiana Univ. Math. J.},
   volume={64},
   date={2015},
   number={6},
   pages={1697--1719},
   issn={0022-2518},
   review={\MR{3436232}},
   doi={10.1512/iumj.2015.64.5706},
}
\bib{grafakos}{book}{
   author={Grafakos, Loukas},
   title={Classical Fourier analysis},
   series={Graduate Texts in Mathematics},
   volume={249},
   edition={3},
   publisher={Springer, New York},
   date={2014},
   pages={xviii+638},
}

\bib{Hedberg}{article}{
   author={Hedberg, Lars Inge},
   title={On certain convolution inequalities},
   journal={Proc. Amer. Math. Soc.},
   volume={36},
   date={1972},
   pages={505--510},
   issn={0002-9939},
   review={\MR{312232}},
   doi={10.2307/2039187},
}

\bib{JN}{article}{
   author={John, F.},
   author={Nirenberg, L.},
   title={On functions of bounded mean oscillation},
   journal={Comm. Pure Appl. Math.},
   volume={14},
   date={1961},
   pages={415--426},
   issn={0010-3640},
   review={\MR{131498}},
   doi={10.1002/cpa.3160140317},
}

\bib{Moser1960}{article}{
   author={Moser, J\"{u}rgen},
   title={A new proof of De Giorgi's theorem concerning the regularity
   problem for elliptic differential equations},
   journal={Comm. Pure Appl. Math.},
   volume={13},
   date={1960},
   pages={457--468},
   issn={0010-3640},
   review={\MR{170091}},
   doi={10.1002/cpa.3160130308},
}

\bib{Moser1971}{article}{
   author={Moser, J.},
   title={A sharp form of an inequality by N. Trudinger},
   journal={Indiana Univ. Math. J.},
   volume={20},
   date={1970/71},
   pages={1077--1092},
   issn={0022-2518},
   review={\MR{301504}},
   doi={10.1512/iumj.1971.20.20101},
}

\bib{oneil}{article}{
   author={O'Neil, Richard},
   title={Convolution operators and $L(p,\,q)$ spaces},
   journal={Duke Math. J.},
   volume={30},
   date={1963},
   pages={129--142},
}

\bib{Peetre}{article}{
   author={Peetre, Jaak},
   title={On convolution operators leaving $L^{p,}\,^{\lambda }$ spaces
   invariant},
   journal={Ann. Mat. Pura Appl. (4)},
   volume={72},
   date={1966},
   pages={295--304},
   issn={0003-4622},
   review={\MR{209917}},
   doi={10.1007/BF02414340},
}

\bib{Peetre-Fr}{article}{
   author={Peetre, Jaak},
   title={Espaces d'interpolation et th\'{e}or\`eme de Soboleff},
   language={French},
   journal={Ann. Inst. Fourier (Grenoble)},
   volume={16},
   date={1966},
   number={fasc., fasc. 1},
   pages={279--317},
   issn={0373-0956},
   review={\MR{221282}},
}

\bibitem{P}  Pohozaev, S. I., {\em The Sobolev embedding in the case $pl = n$}, Proc. Tech. Sci. Conf. on Adv. Sci. Research 1964-1965, Mathematics Section, Moskov. Energet. Inst. Moscow (1965), pp. 158-170.

\bib{S}{book}{
   author={Stein, Elias M.},
   title={Singular integrals and differentiability properties of functions},
   series={Princeton Mathematical Series, No. 30},
   publisher={Princeton University Press, Princeton, N.J.},
   date={1970},
   pages={xiv+290},
   review={\MR{0290095}},
}

\bib{Shafrir}{article}{
   author={Shafrir, Itai},
   title={The best constant in the embedding of $W^{N,1}(\Bbb R^N)$ into
   $L^\infty(\Bbb R^N)$},
   journal={Potential Anal.},
   volume={50},
   date={2019},
   number={4},
   pages={581--590},
   issn={0926-2601},
   review={\MR{3938494}},
   doi={10.1007/s11118-018-9695-5},
}
\bib{ShafrirSpector}{article}{
   author={Shafrir, Itai},
   author={Spector, Daniel},
   title={Best constants for two families of higher order critical Sobolev
   embeddings},
   journal={Nonlinear Anal.},
   volume={177},
   date={2018},
   number={part B},
   part={part B},
   pages={753--769},
   issn={0362-546X},
   review={\MR{3886600}},
   doi={10.1016/j.na.2018.04.027},
}

\bib{Strichartz}{article}{
   author={Strichartz, Robert S.},
   title={A note on Trudinger's extension of Sobolev's inequalities},
   journal={Indiana Univ. Math. J.},
   volume={21},
   date={1971/72},
   pages={841--842},
   issn={0022-2518},
   review={\MR{293389}},
   doi={10.1512/iumj.1972.21.21066},
}

\bib{SW}{book}{
   author={Stein, Elias M.},
   author={Weiss, Guido},
   title={Introduction to Fourier analysis on Euclidean spaces},
   note={Princeton Mathematical Series, No. 32},
   publisher={Princeton University Press, Princeton, N.J.},
   date={1971},
   pages={x+297},
   review={\MR{0304972}},
}

 \bib{Trudinger}{article}{
   author={Trudinger, Neil S.},
   title={On imbeddings into Orlicz spaces and some applications},
   journal={J. Math. Mech.},
   volume={17},
   date={1967},
   pages={473--483},
   review={\MR{0216286}},
   doi={10.1512/iumj.1968.17.17028},
}

\bibitem{V}  Orobitg, J.; Verdera, J., {\em Choquet Integrals, Hausdorff Content and the Hardy-Littlewood Maximal Operator}, Bulletin of the London Mathematical Society, Volume 30, Issue 2, March 1998, pp. 145-150.

\bib{XiaoZhai}{article}{
   author={Xiao, J.},
   author={Zhai, Zh.},
   title={Fractional Sobolev, Moser-Trudinger Morrey-Sobolev inequalities
   under Lorentz norms},
   note={Problems in mathematical analysis. No. 45},
   journal={J. Math. Sci. (N.Y.)},
   volume={166},
   date={2010},
   number={3},
   pages={357--376},
   issn={1072-3374},
   review={\MR{2839038}},
   doi={10.1007/s10958-010-9872-6},
}

\bib{Yudovich}{article}{
   author={Judovi\v{c}, V. I.},
   title={Some estimates connected with integral operators and with
   solutions of elliptic equations},
   language={Russian},
   journal={Dokl. Akad. Nauk SSSR},
   volume={138},
   date={1961},
   pages={805--808},
   issn={0002-3264},
   review={\MR{0140822}},
}

\end{biblist}
	
\end{bibdiv}

\end{document}